\theoremstyle{plain}
\newtheorem{corollary}{Corollary}
\newtheorem{lemma}{Lemma}
\newtheorem{proposition}{Proposition}
\newtheorem{remark}{Remark}
\numberwithin{equation}{section}
\begin{document}
\title[Expansions of densities]{Expansions of one density via polynomials
orthogonal with respect to the other.}
\author{Pawe\l\ J. Szab\l owski}
\address{Department of Mathematics and Information Sciences\\
Warsaw University of Technology\\
pl. Politechniki 1, \\
00-661 Warszawa, Poland}
\email{pawel.szablowski@gmail.com,pszablowski@elka.pw.edu.pl}
\date{10 February 2009}
\subjclass[2000]{Primary 42A16, 33D50 ; Secondary 60E05 26C05}
\keywords{orthogonal polynomials, $q-$Hermite polynomials, Al-Salam--Chihara
polynomials, Chebyshev Polynomials, Rogers polynomials, connection
coefficients, positive kernels, kernel expansion, Poisson -Mehler expansion, 
$q-$Gaussian distribution, Wigner distribution, Kesten--McKay distribution, }

\begin{abstract}
We expand the Chebyshev polynomials and some of its linear combination in
linear combinations of the $q-$Hermite, the Rogers ($q-$utraspherical) and
the Al-Salam--Chihara polynomials and vice versa. We use these expansions to
obtain expansions of some densities, including $q-$Normal and some related
to it, in infinite series constructed of the products of the other density
times polynomials orthogonal to it, allowing deeper analysis and discovering
new properties. On the way we find an easy proof of expansion of the
Poisson--Mehler kernel as well as its reciprocal. We also formulate simple
rule relating one set of orthogonal polynomials to the other given the
properties of the ratio of the respective densities of measures
orthogonalizing these polynomials sets.
\end{abstract}

\thanks{The author would like to thank unknown referee for his tedious work
in pointing out numerous editorial mistakes of the manuscript.}
\maketitle

\section{Introduction}

The aim of this paper is to formulate a simple rule of expanding one density
in terms of products of the other density times the polynomial orthogonal
with respect to this density. Then to present some of its consequences and
applications. The original aim of such expansions was to use them to find
some 'easy to generate', simple densities that bound from above other
densities that were given in the form of infinite products. In other words
the original aim of such expansions was practical and connected with the
idea of generating i.i.d. sequences of observations drawn from distributions
given by the densities that have difficult to analyze form, e.g. are given
in the form of an infinite product. Later however, it turned out that such
expansions are interesting by its own allowing deeper insight into
distributions that are defined by the densities involved. In particular 'two
lines proofs' are possible of the identities that traditionally are proved
on a half or more pages.

A simple reflection leads to the conclusion that we deal with this type of
situation in the case of e.g. the Poisson--Mehler expansion formula or
recently obtained (see \cite{szab2}) expansion of the $q-$Normal density in
terms of products of the Wigner density times appropriately scaled Chebyshev
polynomials. Thus it is the time to generalize it, formulate general rule
and obtain some new expansions. It will turn out that following this general
rule, the difficulty of obtaining expansion of the type discussed in the
paper is shifted to difficulties in getting the so called "connection
coefficients" obtained by expanding one family of orthogonal polynomials
with respect to the other.

In particular we will obtain new expansions of the so called $q-$Conditional
Normal density in the series of Kesten--McKay type density times some
special combinations of Chebyshev polynomials, reciprocals of the
Poisson--Mehler expansion formula and expansions of some other more
specialized densities.

The two mentioned above densities and distributions defined by them appeared
recently in works of Bo\.{z}ejko at al. \cite{Bo} in the noncommutative or
Bryc \cite{bryc1}, \cite{bryc2} in the classical probability context. These
densities were originally defined in terms of infinite products and thus it
was difficult to work with them for someone not familiar with notation,
notions and basic results in the so called $q-$series theory.

In many branches or applications of functional analysis such as theory of
linear operators or quantum groups we deal with Mercer's type kernels i.e.
expressions of the type $\sum_{n\geq 0}r_{n}\phi _{n}\left( x\right) \psi
_{n}\left( y\right) ,$ where $\left\{ r_{n}\right\} _{n\geq 0}$ is a
sequence of reals, $\left\{ \phi _{n}\left( x\right) \right\} _{n\geq 0},$ $%
\left\{ \psi _{n}\left( x\right) \right\} _{n\geq 0}$ are sequences of
square integrable functions from some space $L_{2}\left( \mathbb{R},\mathcal{%
B},\gamma \right) .$ The problem is to provide conditions for non-negativity
of such a kernel when $\left( x,y\right) $ belong to some Cartesian product
of intervals, prove this non-negativity and also express such a kernel in
some compact, easier to analyze, form, i.e. sum it. The point is that many
of the expansions that we have obtained in the paper are in fact certain
kernels. Variable $y$ plays a r\^{o}le of a parameter. By the nature of the
expansion we know its sum and know that it is nonnegative. Hence the paper
can be helpful solving important problems associated with summing and
examining positivity of kernels.

The ideas we are presenting here are universal and can be applied to any
densities and systems of orthogonal polynomials.

The paper is organized as follows. The next Section \ref{idea} presents
general idea of expansion, the main subject of the paper, as well as simple
Proposition presenting relationship between sets of polynomials given the
ratio of the densities of measures orthogonalizing these sets of
polynomials. The task, in a sense, inverse to the idea of expansion. This
section contains also Subsection \ref{przyklady} that presents some,
instructive and believed to be interesting, examples of expansions between
simple measures orthogonalizing well known sets of polynomials such as
Chebyshev, Hermite and some of their combinations. Next we introduce
notation used in the $q-$series theory in Subsection \ref{notation}. Then we
list densities that we will analyze in Subsection \ref{densities}. Finally
we present families of orthogonal polynomials that will be used in the
sequel and associate them with measures that make these sets of polynomials
orthogonal in Subsection \ref{polynomials}. In particular we present here
the $q-$Hermite, the Al-Salam--Chihara and the Rogers ($q-$utraspherical)
polynomials. The next Section \ref{auxi} is devoted to listing known and
finding some new connection coefficients between considered in the paper
families of polynomials. Section \ref{exp} presents main results of the
paper, that is expansions of one density in the series of the other density
times series of polynomials orthogonal with respect to this other measure.
We also give some (by no means all possible) immediate consequences that
lead to interesting identities. Finally Section \ref{dowody} contains some
lengthy proofs of some of the results of Section \ref{auxi}. This section,
following suggestion of the referee, contains also a few sentences
presenting basic properties of orthogonal polynomials as well as reference
to some literature dedicated to the theory of orthogonal polynomials.

\section{Idea of expansion\label{idea}}

The idea of expansion that we are going to pursue is general, simple and is
not new (it can be found in e.g. in \cite{Is2005}, Exercise 2.9). We believe
that it is very fruitful and has not been sufficiently exploited. It is as
follows.

Suppose we have two measures $\alpha $ and $\beta $ defined on $\mathbb{R}.$
Let us define two spaces $L_{2}\left( \mathbb{R},\mathcal{B},\mathcal{\alpha 
}\right) $ and $L_{2}\left( \mathbb{R},\mathcal{B},\mathcal{\beta }\right) ,$
where $\mathcal{B}$ denotes a set of Borel subsets, of real functions
defined on $\mathbb{R}$, square integrable with respect to measures $\alpha $
and $\beta $ respectively. Assume also that $\limfunc{supp}\beta \subseteq 
\limfunc{supp}\alpha $. Further suppose that we know the sets of polynomials 
$\left\{ a_{n}\left( x\right) \right\} _{n\geq 0}$ and $\left\{ b_{n}\left(
x\right) \right\} _{n\geq 0}$ defined on $\mathbb{R}$ that are orthogonal
with respect to the measures $\alpha $ and $\beta $ respectively. That is,
assume that we know that: 
\begin{gather*}
\forall m,n\geq 0:\int_{\mathbb{R}}a_{n}\left( x\right) a_{m}\left( x\right)
d\alpha (x)\allowbreak =\allowbreak \delta _{nm}\hat{a}_{n}, \\
\int_{\mathbb{R}}b_{n}\left( x\right) b_{m}\left( x\right) d\beta \left(
x\right) \allowbreak =\allowbreak \delta _{mn}\hat{b}_{n},
\end{gather*}%
where $\delta _{mn}$ denotes as usually Kronecker's delta.

Suppose also that we know connection coefficients between the sets $\left\{
a_{n}\left( x\right) \right\} _{n\geq 0}$ and $\left\{ b_{n}\left( x\right)
\right\} _{n\geq 0}$ i.e. we know numbers $\gamma _{k,n}$ such that 
\begin{equation*}
\forall n\geq 1:a_{n}\left( x\right) \allowbreak =\allowbreak
\sum_{k=0}^{n}b_{k}\left( x\right) \gamma _{k,n}.
\end{equation*}%
Further suppose that the measures $\alpha $ and $\beta $ have densities $%
A\left( x\right) $ and $B\left( x\right) $ respectively. Then

\begin{equation}
B\left( x\right) =A\left( x\right) \sum_{n=0}^{\infty }c_{n}a_{n}\left(
x\right) ,  \label{rozkl}
\end{equation}%
where $c_{n}\allowbreak =\allowbreak \gamma _{0,n}\hat{b}_{0}/\hat{a}_{n}.$

The sense of (\ref{rozkl}) and the type of its convergence depends on the
properties of the functions $B\left( x\right) $, $A\left( x\right) $ and the
coefficients $\left\{ c_{n}\right\} _{n\geq 1}.$ If 
\begin{equation*}
\int_{\mathbb{R}}\left( B\left( x\right) ^{2}/A^{2}\left( x\right) \right)
d\alpha (x)\allowbreak <\allowbreak \infty
\end{equation*}%
that is if $B\left( x\right) /A\left( x\right) \allowbreak \in \allowbreak
L_{2}\left( \mathbb{R},\mathcal{B},\mathcal{\alpha }\right) $, series $%
\sum_{n=0}^{\infty }c_{n}a_{n}\left( x\right) $ converges in $L_{2}\left( 
\mathbb{R},\mathcal{B},\mathcal{\alpha }\right) $ and depending on the
coefficients $\left\{ c_{n}\right\} _{n\geq 0}$ we can even have almost
(with respect to $\alpha )$ pointwise convergence (more precisely if $%
\sum_{i\geq 1}\left\vert c_{n}\right\vert ^{2}\log ^{2}n<\infty ,$ by the
Rademacher--Menshov Thm.).

However in general $B\left( x\right) /A\left( x\right) $ is only integrable
with respect to measure $\alpha .$ Then one has to refer to the distribution
theory. $\sum_{n=0}^{\infty }c_{n}a_{n}\left( x\right) $ is then in general
a distribution of order $0$.

To see that really 
\begin{equation*}
c_{n}\allowbreak =\allowbreak \gamma _{0,n}\hat{b}_{0}/\hat{a}_{n},
\end{equation*}%
for $n\geq 0$ let us multiply both sides of (\ref{rozkl}) by $\alpha
_{n}\left( x\right) $ and integrate over $\limfunc{supp}\alpha $. On the
left hand side we will get $\gamma _{0,n}\hat{b}_{0}$ since 
\begin{equation*}
\int_{\mathbb{R}}b_{k}\left( x\right) B\left( x\right) dx\allowbreak
=\allowbreak 0
\end{equation*}%
for $k\geq 1.$ On the right hand side we get $c_{n}\hat{a}_{n}.$

\begin{remark}
Of course to get the expansion (\ref{rozkl}) one needs only to calculate 
\begin{equation*}
\int_{\mathbb{R}}\alpha _{m}\left( x\right) d\beta \left( x\right)
\allowbreak =\allowbreak \gamma _{0,m}.
\end{equation*}
On the other hand to get connection coefficients one needs to do some
algebra without integration. This sometimes can be simpler.
\end{remark}

The idea of relating sets of polynomials given the relationship between
measures that make these sets of polynomials orthogonal is not new (see e.g. 
\cite{Is2005} Thm.2.7.1 (by Christoffel)), assertion iii). Christoffel's
relationship between sets of polynomials given the fact that the ratio
between orthogonalizing these polynomials measures is a polynomial is
accurate given the zeros of this polynomial. If the polynomial is of order
more than $2$ it is hard to find these zeros as functions of coefficients.
This is of course limitation of possible applications of Christoffel's
result. The following simple Proposition can be viewed as simplified
modification of Christoffel's Theorem. It contains series of simple remarks
concerning relationships between discussed sets of polynomials. They do not
give precise relationship but in particular situation, confronted together
can give such connection. Besides here the only thing one has to know about
the ratio of the measures is its expansion with respect to one of these sets
of polynomials.

\begin{proposition}
\label{iloraz}Suppose $\alpha $, $\beta ,$ $A\left( x\right) ,$ $B\left(
x\right) ,$ are as described above. Assume also that $\limfunc{supp}\beta
\subseteq \limfunc{supp}\alpha .$ Suppose further that $\left\{
a_{i}\right\} _{i\geq 1}$ and $\left\{ b_{i}\right\} _{i\geq 1}$ polynomials
are monic\footnote{%
Polynomial $p_{n}\left( x\right) $ of order $n$ is called monic if
coefficient at $x^{n}$ is equal to $1.$}. Suppose additionally that we know
that $B\left( x\right) /A\left( x\right) \allowbreak =\allowbreak W\left(
x\right) ,$where $W$ can be expanded in the series of polynomials $%
a_{i}\left( x\right) :$ 
\begin{equation*}
W\left( x\right) \allowbreak =\allowbreak 1+\sum_{i=1}^{N}w_{i}a_{i}\left(
x\right) /\hat{a}_{i}
\end{equation*}%
where $\hat{a}_{i}\allowbreak =\allowbreak \int a_{i}^{2}\left( x\right)
A\left( x\right) dx,$ converging in $L_{2}\left( \mathbb{R}\mathbf{,}%
\mathcal{B},\mathcal{\alpha }\right) $. Put $w_{0}\allowbreak =\allowbreak
1. $ Number $N$ can be finite or infinite. Let us recursively define the
sequence of numbers $\left\{ f_{n}\right\} _{n\geq 0},$ with $%
f_{0\allowbreak }\allowbreak =\allowbreak 1$ by: 
\begin{equation*}
n\geq 1:\sum_{i=0}^{n}f_{n-i}w_{i}\allowbreak =\allowbreak 0,
\end{equation*}%
where we set $w_{i}\allowbreak =\allowbreak 0$ for $i\geq N\allowbreak
+\allowbreak 1$ if $N$ is finite.

i) Then monic polynomials defined by: 
\begin{equation*}
\phi _{n}\left( x\right) \allowbreak =\sum_{i=0}^{n}f_{n-i}a_{i}\left(
x\right)
\end{equation*}%
satisfy $\int_{\mathbb{R}}\phi _{n}\left( x\right) B\left( x\right)
dx\allowbreak =\allowbreak 0,$ $n=1,2,\ldots $ . Besides for $\forall n\geq
1 $: 
\begin{equation*}
a_{n}\left( x\right) \allowbreak =\allowbreak \sum_{i=0}^{n}w_{n-i}\phi
_{i}\left( x\right) .
\end{equation*}

ii) If $N$ is finite, then $\int a_{i}\left( x\right) dB\left( x\right)
\allowbreak =\allowbreak w_{i},$ $i\allowbreak =\allowbreak 1,\ldots ,N,$
and $\int a_{i}\left( x\right) dB\left( x\right) \allowbreak =\allowbreak
0,~\forall i\geq N+1.$ In particular:%
\begin{equation*}
a_{n}\left( x\right) \allowbreak =\allowbreak \phi _{n}\left( x\right)
+\sum_{i=1}^{N}w_{i}\phi _{n-i}\left( x\right) ,
\end{equation*}%
for $n\geq N+1.$

iii) If $N$ is finite then there exist $N$ sequences $\left\{ \gamma
_{n,j}\right\} _{n\geq 1,1\leq j\leq N}$ such that $\forall n\geq 1$%
\begin{equation*}
a_{n}\left( x\right) \allowbreak =\allowbreak b_{n}\left( x\right)
\allowbreak +\allowbreak \sum_{j=1}^{N}\gamma _{n,j}b_{n-j}\left( x\right) .
\end{equation*}
\end{proposition}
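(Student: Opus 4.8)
The plan is to treat the three parts in order, using throughout the orthogonality relations $\int a_i a_j\,A\,dx = \delta_{ij}\hat a_i$ together with the hypothesis $B = A W$. Normalizing so that $\alpha$ and $\beta$ are probability measures (hence $\hat a_0 = \int A\,dx = 1$), the constant term of $W$ is $w_0 = 1$ and I may write uniformly $W(x) = \sum_{i=0}^N (w_i/\hat a_i)\,a_i(x)$ with $a_0 \equiv 1$. I would first record that $\phi_n$ is monic of degree $n$, since the only contribution to $x^n$ is the term $f_0 a_n = a_n$.

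For part i, to obtain $\int \phi_n B\,dx = 0$ I would substitute $\phi_n = \sum_{i} f_{n-i} a_i$ and $B = A W$ and integrate term by term: orthogonality collapses the double sum to $\int \phi_n B\,dx = \sum_{i=0}^{\min(n,N)} f_{n-i} w_i$, and since $w_i = 0$ for $i > N$ this is exactly $\sum_{i=0}^n f_{n-i} w_i$, which vanishes for $n \geq 1$ by the recursive definition of $\{f_n\}$. The second identity of part i, $a_n = \sum_i w_{n-i}\phi_i$, I would get by observing that the defining relation $\sum_{i=0}^n f_{n-i} w_i = \delta_{n,0}$ says precisely that the lower-triangular Toeplitz arrays generated by $\{f_n\}$ and $\{w_n\}$ are mutually inverse; substituting $\phi_i = \sum_j f_{i-j} a_j$ into $\sum_i w_{n-i}\phi_i$ and rearranging the resulting convolution $\sum_k f_k w_{m-k} = \delta_{m,0}$ returns $a_n$.

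Part ii is then essentially immediate: $\int a_i\,dB = \int a_i A W\,dx = w_i$ for $1 \leq i \leq N$ and $0$ for $i \geq N+1$, directly from orthogonality and the expansion of $W$; the banded formula for $n \geq N+1$ follows from the reindexed form $a_n = \sum_{j=0}^n w_j \phi_{n-j}$ of part i by dropping the terms with $w_j = 0$, $j > N$, and using $w_0 = 1$.

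The heart of the proposition, and the step I expect to require the real idea, is part iii. Since $\{b_k\}$ is a monic basis I would write $a_n = \sum_{k=0}^n \gamma'_{n,k} b_k$ with $\gamma'_{n,n} = 1$; for $k < n$ the coefficient is $\gamma'_{n,k} = \hat b_k^{-1}\int a_n b_k B\,dx$ by $\beta$-orthogonality. The key observation is that $W$ is a polynomial of degree $N$ (this is where finiteness of $N$ enters), so $b_k W$ is a polynomial of degree $k + N$; writing $\int a_n b_k B\,dx = \int a_n\,(b_k W)\,A\,dx$ and using that $a_n$ is $\alpha$-orthogonal to every polynomial of degree strictly less than $n$, this integral vanishes as soon as $k + N < n$. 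Hence $\gamma'_{n,k} = 0$ for $k < n - N$, only $b_n, b_{n-1}, \dots, b_{n-N}$ survive, and reindexing by $j = n - k$ gives $a_n = b_n + \sum_{j=1}^N \gamma_{n,j} b_{n-j}$ with $\gamma_{n,j} = \gamma'_{n,n-j}$. For $n \leq N$ the bound $k < n - N$ is vacuous, so the expansion simply runs down to $b_0$ and the formula holds with the convention that terms $b_{n-j}$ with $n - j < 0$ are absent. The only points needing care are the normalization $w_0 = \hat a_0 = 1$ used to align the constant terms and this small-$n$ boundary convention; the remaining manipulations are the routine Toeplitz and convolution bookkeeping indicated above.
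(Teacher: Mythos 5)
Your parts i) and ii) follow essentially the same path as the paper: the same collapse of the double sum $\sum_{i}\sum_{j}f_{n-i}(w_j/\hat a_j)\int a_ia_jA\,dx$ via orthogonality, and the same convolution/Toeplitz inversion $\sum_{s}w_sf_{n-j-s}=\delta_{n,j}$ to recover $a_n=\sum_i w_{n-i}\phi_i$ (the paper writes this rearrangement out explicitly). Part iii) is where you genuinely diverge, and your route is correct. You prove the banded structure by a degree argument: since $N$ is finite, $W$ is a polynomial of degree at most $N$, so $\gamma'_{n,k}\hat b_k=\int a_n b_k B\,dx=\int a_n\,(b_kW)\,A\,dx$ vanishes whenever $k+N<n$, because $a_n$ is $\alpha$-orthogonal to all polynomials of degree strictly below $n$. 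The paper instead expands $a_n=\sum_i c_{n,i}b_i$, uses ii) to get $c_{n,0}=0$ for $n\geq N+1$, and then equates the two expressions for $xa_n$ obtained from the three-term recurrences of $\{a_i\}$ and $\{b_i\}$, propagating the vanishing upward by induction ($c_{n,i}=0$ for $n\geq N+i+1$). Your argument is shorter, self-contained (it does not even invoke ii)), and is the classical Christoffel-type reasoning; the paper's recurrence approach is heavier but produces explicit recursions linking the $c_{n,i}$ to the recurrence coefficients of both families, which in principle lets one compute the $\gamma_{n,j}$, at the price of implicitly needing the $b$-recurrence coefficients $\hat\beta_i$ to be nonzero (automatic for positive orthogonalizing measures). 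The only points to keep flagged in your write-up, as you do, are the harmless normalization $\hat a_0=1$ and the convention that terms $b_{n-j}$ with $n-j<0$ are absent when $n\leq N$.
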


\begin{proof}
Is moved to section \ref{dowody}
\end{proof}

\begin{remark}
The most important assertion of the Proposition above is the assertion iii).
It is illustrated by at least two examples presented below: Example 1 where
we analyze the ratio of the two densities with respect to which the
Chebyshev polynomials of the second and the first kind are orthogonal. This
ratio is a polynomial of order $2$ ( $N=2)$ and thus we have formula (\ref%
{tnau}) expressing the Chebyshev polynomials of the first kind as a finite
(involving $3\allowbreak =\allowbreak N+1$ last only) combination of the
Chebyshev polynomials of the second kind. Similar situation is in Example 2,
below.
\end{remark}

\subsection{Examples\label{przyklady}}

Let us denote as usually: $I_{A}\left( x\right) =\left\{ 
\begin{array}{ccc}
1 & if & x\in A \\ 
0 & if & x\notin A%
\end{array}%
\right. $.

\begin{enumerate}
\item Let us take 
\begin{equation*}
A\left( x\right) \allowbreak =\allowbreak \frac{1}{\pi \sqrt{1-x^{2}}}%
I_{(-1,1)}\left( x\right) ,\text{and }a_{n}\left( x\right) \allowbreak
=\allowbreak T_{n}\left( x\right) ,n\geq -1
\end{equation*}%
(Chebyshev Polynomials of the first kind). Further let us take: 
\begin{equation*}
B\left( x\right) \allowbreak =\allowbreak \frac{2}{\pi }\sqrt{1-x^{2}}%
I_{\left( -1,1\right) }\left( x\right) ,\text{ }b_{n}\left( x\right)
\allowbreak =U_{n}\left( x\right) ,n\geq -1
\end{equation*}%
(Chebyshev Polynomials of the second kind). It is (see e.g. \cite{AAR} or 
\cite{Is2005}) known that 
\begin{eqnarray*}
\int_{-1}^{1}a_{n}\left( x\right) a_{m}\left( x\right) A\left( x\right)
dx\allowbreak &=&\left\{ 
\begin{array}{ccc}
1 & for & n=m=0 \\ 
\frac{1}{2}\delta _{nm} & for & n\neq 0\text{ or }m\neq 0%
\end{array}%
\right. , \\
\int_{-1}^{1}b_{n}\left( x\right) b_{m}\left( x\right) B\left( x\right)
dx\allowbreak &=&\allowbreak \delta _{nm}.
\end{eqnarray*}%
Polynomials $\left\{ T_{n}\right\} $ and $\left\{ U_{n}\right\} $ satisfy
the same three term recurrence however with different initial conditions for 
$n\allowbreak =\allowbreak 1.$ Namely $T_{-1}\left( x\right) \allowbreak
\allowbreak =\allowbreak U_{-1}\left( x\right) \allowbreak =0,$ $\allowbreak
T_{0}\left( x\right) \allowbreak =\allowbreak U_{0}\left( x\right)
\allowbreak =\allowbreak 1,$ $T_{1}\left( x\right) \allowbreak =\allowbreak
x,$ $U_{1}\left( x\right) \allowbreak =\allowbreak 2x$ and 
\begin{equation}
2xT_{n}\left( x\right) =T_{n+1}\left( x\right) +T_{n-1}\left( x\right) ,
\label{_0}
\end{equation}
for $n\geq 0.$

Now notice that 
\begin{equation*}
(U_{1}\left( x\right) \allowbreak -\allowbreak U_{-1}\left( x\right)
)/2\allowbreak =\allowbreak x\allowbreak =\allowbreak T_{1}\left( x\right) .
\end{equation*}%
Besides we have 
\begin{gather*}
x(U_{n}\left( x\right) \allowbreak -U_{n-2}\left( x\right) )/2\allowbreak
=\allowbreak (U_{n+1}\left( x\right) \allowbreak \allowbreak +\allowbreak
U_{n-1}\left( x\right) -U_{n-1}\left( x\right) \allowbreak +\allowbreak
U_{n-3}\left( x\right) )/2\allowbreak \\
=\allowbreak (U_{n+1}\left( x\right) \allowbreak -\allowbreak U_{n-1}\left(
x\right) )/2+(U_{n-1}\left( x\right) \allowbreak -\allowbreak U_{n-3}\left(
x\right) )/2,
\end{gather*}%
which is the three term recurrence (\ref{_0}) satisfied by polynomials $%
T_{n}.$ Hence: 
\begin{equation}
\forall n\geq 1:T_{n}\left( x\right) \allowbreak =\allowbreak \left(
U_{n}\left( x\right) -U_{n-2}\left( x\right) \right) /2.  \label{tnau}
\end{equation}%
Thus consequently we have $\gamma _{0,0}\allowbreak =\allowbreak 1,$ $%
\allowbreak \allowbreak $%
\begin{equation*}
\gamma _{k,n}\allowbreak =\allowbreak \left\{ 
\begin{array}{ccc}
1/2 & if & k=n \\ 
-1/2 & if & k=n-2 \\ 
0 & if & otherwise%
\end{array}%
\right. ,
\end{equation*}%
for $n\geq 1$. So $\gamma _{0,0}\allowbreak =\allowbreak 1,$ $\gamma
_{0,1}\allowbreak =\allowbreak 0,\gamma _{0,2}\allowbreak =\allowbreak -1/2,$
$\gamma _{0,n}\allowbreak =\allowbreak 0$ for $n\geq 3.$ Hence we have
elementary relationship 
\begin{equation*}
B\left( x\right) \allowbreak =\allowbreak A\left( x\right) (1-T_{2}\left(
x\right) )\allowbreak =\allowbreak 2A\left( x\right) (1-x^{2}).
\end{equation*}

Similarly one can deduce that : 
\begin{equation*}
\forall n\geq 1:U_{n}\left( x\right) =2\sum_{i=0}^{\left\lfloor
n/2\right\rfloor }T_{n-2i}\left( x\right) -\left( 1+\left( -1\right)
^{n}\right) /2.
\end{equation*}%
Hence $\gamma _{0,2i+1}\allowbreak =\allowbreak 0,$ $\gamma
_{0,2i}\allowbreak =\allowbreak 1,$ $i=0,1,2,\ldots $ $.$ Thus we have 
\begin{equation*}
A\left( x\right) \allowbreak =\allowbreak B\left( x\right)
\sum_{i=0}^{\infty }U_{2i}\left( x\right)
\end{equation*}%
and we do not have neither pointwise nor even $\func{mod}$ $\beta $
convergence\footnote{%
'$\func{mod}$ $\beta $' traditionally in probability means 'in measure $%
\beta $'$.$}. One can deduce, following definition of distributions that the
right hand side of the above equality is a distribution $t_{\alpha }$ for
which $\forall n\geq 1$ $\ t_{\alpha }\left( T_{n}\right) \allowbreak
=\allowbreak 0,$ by (\ref{tnau}) and orthogonality of $\left\{ U_{i}\right\}
_{i\geq 1}$ with respect to $B\left( x\right) .$ However we are not going to
continue this topic since our main concern are regular, convergent cases.
Deeper analysis as well as the generalization of this case can lead to some
interesting theoretical problems. In particular what is the meaning of
similar expansions in the case when the condition $\limfunc{supp}\beta
\subseteq \limfunc{supp}\alpha $ is not satisfied but the connection
coefficients are known?

\item Let%
\begin{equation*}
A\left( x|y,\rho \right) \allowbreak =\allowbreak \frac{\left( 1-\rho
^{2}\right) \sqrt{4-x^{2}}}{2\pi ((1-\rho ^{2})^{2}-\rho xy(1+\rho
^{2})+\rho ^{2}(x^{2}+y^{2}))}
\end{equation*}%
if $x\in (-2,2)$ and $0$ otherwise and $\left\vert y\right\vert \leq 2,$ $%
\left\vert \rho \right\vert <1$ be a particular case of the Kesten--McKay
density considered also in the sequel. It is known (also it follows the fact
that it is a particular case of considered in the sequel distribution $%
f_{CN})$ that the following polynomials 
\begin{equation*}
k_{n}\left( x|y,\rho \right) \allowbreak =\allowbreak U_{n}\left( x/2\right)
\allowbreak -\allowbreak \rho yU_{n-1}\left( x/2\right) \allowbreak
+\allowbreak \rho ^{2}U_{n-2}\left( x/2\right)
\end{equation*}%
when $n\geq 2,$ $k_{1}\left( x|y,\rho \right) \allowbreak =\allowbreak
x-\rho y$ and $k_{0}\left( x|y,\rho \right) \allowbreak =\allowbreak 1$ are
orthogonal with respect to the measure defined by $A.$

As the measure $\beta $ let us take same measure as in the previous example
but re-scaled by $2$. More precisely let $\beta $ have density 
\begin{equation*}
B\left( x\right) \allowbreak =\allowbreak \frac{1}{2\pi }\sqrt{4-x^{2}}.
\end{equation*}%
Hence re-scaled Chebyshev polynomials $U_{n}\left( x/2\right) $ are
orthogonal with respect to $\beta .$ As far as the expansion of $B$ is
concerned we have 
\begin{equation*}
\gamma _{0,n}\allowbreak =\allowbreak \left\{ 
\begin{array}{ccc}
0 & if & n>2 \\ 
\rho ^{2} & if & n=2 \\ 
-\rho y & if & n=1%
\end{array}%
.\right.
\end{equation*}%
Besides it is known (also from (\ref{z p})) that $\int_{-2}^{2}k_{n}^{2}%
\left( x|y,\rho ,0\right) A\left( x|y,\rho \right) \allowbreak =\allowbreak
(1-\rho ^{2}).$ Hence we have:%
\begin{eqnarray*}
B\left( x\right) \allowbreak &=&\allowbreak A\left( x|y,\rho \right) \left(
1-\frac{\rho y}{(1-\rho ^{2})}k_{1}\left( x|y,\rho \right) +\frac{\rho ^{2}}{%
\left( 1-\rho ^{2}\right) }k_{2}\left( x|y,\rho \right) \right) \\
&=&A\left( x|y,\rho \right) \left( (1-\rho ^{2})^{2}-\rho (1-q)xy(1+\rho
^{2})+(1-q)\rho ^{2}(x^{2}+y^{2})\right) /(1-\rho ^{2}).
\end{eqnarray*}%
On the other hand one can easily derive (or it follows from (4.7) in \cite%
{IRS99} considered for $q\allowbreak =\allowbreak 0$ and noting that $%
h_{n}\left( x|0\right) \allowbreak =\allowbreak U_{n}\left( x\right) $) that 
\begin{equation*}
U_{n}(x/2)\allowbreak =\allowbreak \sum_{j=0}^{n}\rho ^{n-j}U_{n-j}\left(
y/2\right) k_{j}\left( x|y,\rho \right) .
\end{equation*}%
Thus we have $\gamma _{0,n}\allowbreak =\allowbreak \rho ^{n}U_{n}\left(
y/2\right) $ and consequently: 
\begin{equation}
A\left( x|y,\rho \right) \allowbreak =\allowbreak B\left( x\right)
\sum_{i=0}^{\infty }\rho ^{i}U_{i}\left( y/2\right) U_{i}\left( x/2\right) ,
\label{PMq=0}
\end{equation}%
which is a particular case of the Poisson --Mehler kernel to be discussed in
the sequel. \newline

\item Following well known (see e.g. \cite{AAR} Ex. 5, p. 339) formula
concerning Hermite polynomials $H_{n}$ orthogonal with respect to the
measure 
\begin{equation*}
d\alpha \left( x\right) \allowbreak =\allowbreak \frac{1}{\sqrt{2\pi }}\exp
\left( -x^{2}/2\right) dx\allowbreak \overset{df}{=}\allowbreak A\left(
x\right) dx,
\end{equation*}%
\begin{equation*}
\forall \rho \in \left( -1,1\right) ,\forall n\geq 1:H_{n}\left( \rho x+y%
\sqrt{1-\rho ^{2}}\right) =\sum_{i=0}^{n}\binom{n}{i}\rho ^{i}\left( \sqrt{%
1-\rho ^{2}}\right) ^{n-i}H_{i}\left( x\right) H_{n-i}\left( y\right) ,
\end{equation*}%
we can rewrite it in the following form :%
\begin{equation*}
\forall \rho \in \left( -1,1\right) ,\forall n\geq 1:H_{n}\left( x\right)
=\sum_{i=0}^{n}\binom{n}{i}\rho ^{i}H_{i}\left( y\right) \left( \sqrt{1-\rho
^{2}}\right) ^{n-i}H_{n-i}\left( \frac{(x-\rho y)}{\sqrt{1-\rho ^{2}}}%
\right) ,
\end{equation*}%
since we have trivially 
\begin{equation*}
x\allowbreak =\allowbreak \rho y+\sqrt{1-\rho ^{2}}\frac{(x-\rho y)}{\sqrt{%
1-\rho ^{2}}},
\end{equation*}%
and view it as a 'connection coefficient formula' between sets of
polynomials $\left\{ H_{n}\left( x\right) \right\} _{n\geq 0}$ that are
orthogonal with respect the measure $d\alpha $ and\newline
$\left\{ \left( \sqrt{1-\rho ^{2}}\right) ^{n}H_{n}\left( \frac{(x-\rho y)}{%
\sqrt{1-\rho ^{2}}}\right) \right\} _{n\geq 0}$that are orthogonal with
respect to the measure 
\begin{equation*}
d\beta \left( x\right) \allowbreak =\allowbreak \frac{1}{\sqrt{2\pi (1-\rho
^{2})}}\exp \left( -\frac{\left( x-\rho y\right) ^{2}}{2\left( 1-\rho
^{2}\right) }\right) dx\allowbreak \overset{df}{=}\allowbreak B\left(
x\right) dx.
\end{equation*}%
An easy calculation gives $\gamma _{0,n}\allowbreak =\allowbreak \rho
^{n}H_{n}\left( y\right) $ and $\hat{a}\allowbreak =\allowbreak n!$ and we
end up with famous Mehler Hermite Polynomial Formula%
\begin{equation}
\frac{1}{\sqrt{2\pi (1-\rho ^{2})}}\exp \left( -\frac{\left( x-\rho y\right)
^{2}}{2\left( 1-\rho ^{2}\right) }\right) \allowbreak =\allowbreak \frac{1}{%
\sqrt{2\pi }}\exp \left( -\frac{x^{2}}{2}\right) \sum_{i=0}^{\infty }\frac{%
\rho ^{i}}{i!}H_{i}\left( x\right) H_{i}\left( y\right) ,  \label{MHPF}
\end{equation}%
which is better known in a form obtained from the above by dividing both
sides by $\allowbreak \frac{1}{\sqrt{2\pi }}\exp \left( -\frac{x^{2}}{2}%
\right) $ and whose proof takes about a page in popular handbooks of special
functions like e.g. \cite{AAR}.
\end{enumerate}

\section{Densities and families of orthogonal polynomials. Their properties
and relationships\label{properties}}

\subsection{Notation\label{notation}}

We will use traditional notation of the $q-$series theory i.e. 
\begin{equation*}
\left[ 0\right] _{q}\allowbreak =\allowbreak 0;\left[ n\right]
_{q}\allowbreak =\allowbreak 1+q+\ldots +q^{n-1}\allowbreak ,\left[ n\right]
_{q}!\allowbreak =\allowbreak \prod_{i=1}^{n}\left[ i\right] _{q},
\end{equation*}%
with $\left[ 0\right] _{q}!\allowbreak =\allowbreak 1,$%
\begin{equation*}
\QATOPD[ ] {n}{k}_{q}\allowbreak =\allowbreak \left\{ 
\begin{array}{ccc}
\frac{\left[ n\right] _{q}!}{\left[ n-k\right] _{q}!\left[ k\right] _{q}!} & 
when & n\geq k\geq 0 \\ 
0 & when & otherwise%
\end{array}%
\right. .
\end{equation*}%
Sometimes it will be useful to use the so called $q-$Pochhammer symbol : 
\begin{equation*}
\forall n\geq 1:\left( a;q\right) _{n}=\prod_{i=0}^{n-1}\left(
1-aq^{i}\right) ,
\end{equation*}%
with $\left( a;q\right) _{0}=1$ , 
\begin{equation*}
\left( a_{1},a_{2},\ldots ,a_{k};q\right) _{n}\allowbreak =\allowbreak
\prod_{i=1}^{k}\left( a_{i};q\right) .
\end{equation*}%
It is easy to notice that $\left( q;q\right) _{n}=\left( 1-q\right) ^{n}%
\left[ n\right] _{q}!$ and that 
\begin{equation*}
\QATOPD[ ] {n}{k}_{q}\allowbreak =\allowbreak \left\{ 
\begin{array}{ccc}
\frac{\left( q;q\right) _{n}}{\left( q;q\right) _{n-k}\left( q;q\right) _{k}}
& when & n\geq k\geq 0 \\ 
0 & when & otherwise%
\end{array}%
\right. .
\end{equation*}%
Notice also that $\left( a;0\right) _{n}\allowbreak =\allowbreak 1-a$ for $%
n\geq 1$ and $\left( a;1\right) _{n}\allowbreak =\allowbreak \left(
1-a\right) ^{n}.$

If it will not cause misunderstanding Pochhammer symbol $(a;q)_{n}$ or $%
\left( a_{1},a_{2},\ldots ,a_{k};q\right) _{n}$ will often be abbreviated to 
$\left( a\right) _{n}$ and $\left( a_{1},a_{2},\ldots ,a_{k}\right) _{n}$ if
the choice of $q$ is obvious.

Let us also denote $:$ 
\begin{equation*}
S\left( q\right) \allowbreak =\allowbreak \left[ -2/\sqrt{1-q},2/\sqrt{1-q}%
\right] .
\end{equation*}

\subsection{Densities defined by infinite products\label{densities}}

As it follows from the three examples discussed by the end of previous
section, the idea of expanding one density with a help of another, can be
fruitful and lead to interesting formulae and consequently to deeper
understanding of the expanded distribution. Besides some recently used
distributions have densities that are defined with a help of infinite
products. Infinite products are in many ways difficult to deal with. In
particular they are more difficult to calculate many quantities that are
interesting for probabilists like moments for example. That is why we will
use this technique of expansion to accustom, that is to obtain another, more
suitable for further analysis and research, form of the three densities that
appeared recently and that are defined by an infinite series.

Two of these three distributions appeared in the context of one dimensional
random fields (see details \cite{bryc1} and \cite{bms}), $q-$Gaussian
processes (for details see \cite{Bo}) or quadratic harnesses considered by
Bryc at al. \cite{BryWes10}.

All three distributions appeared in the context of special functions in
particular in the context of the Rogers polynomials. However only recently
their importance to both commutative and noncommutative probability became
apparent. As mentioned before distributions $f_{N}\left( x|q\right) $ and $%
f_{CN}\left( x|y,\rho ,q\right) $ that are defined below reappeared in 1997
in the paper \cite{Bo} of Bo\.{z}ejko and Speicher in a purely
noncommutative probability context.

The densities that we are primarily going to analyze are as follows:%
\begin{equation}
f_{N}\left( x|q\right) =\frac{\sqrt{1-q}\left( q\right) _{\infty }}{2\pi 
\sqrt{4-(1-q)x^{2}}}\prod_{k=0}^{\infty }\left(
(1+q^{k})^{2}-(1-q)x^{2}q^{k}\right)  \label{qN}
\end{equation}%
defined for $\left\vert q\right\vert <1$ and $\left\vert x\right\vert
\allowbreak <\allowbreak \frac{2}{\sqrt{1-q}}$ that will be sometimes
referred to as $q-$Normal (briefly $q-$N) distribution and 
\begin{subequations}
\label{fCN}
\begin{gather}
f_{CN}\left( x|y,\rho ,q\right) =\frac{\sqrt{1-q}\left( \rho ^{2},q\right)
_{\infty }}{2\pi \sqrt{4-(1-q)x^{2}}}\times  \label{1} \\
\prod_{k=0}^{\infty }\frac{\left( (1+q^{k})^{2}-(1-q)x^{2}q^{k}\right) }{%
(1-\rho ^{2}q^{2k})^{2}-(1-q)\rho q^{k}(1+\rho ^{2}q^{2k})xy+(1-q)\rho
^{2}(x^{2}+y^{2})q^{2k}},  \label{2}
\end{gather}

defined for $\left\vert q\right\vert <1,$ $\left\vert \rho \right\vert <1$ , 
$\left\vert x\right\vert ,\left\vert y\right\vert \allowbreak <\allowbreak 
\frac{2}{\sqrt{1-q}}$ that will be referred to as $(y,\rho ,q)-$Conditional
Normal, (briefly $(y,\rho ,q)-CN$) distribution.

The third one it is the so called $q-$utraspherical density (density with
respect to which Rogers (called also $q-$utraspherical ) polynomials are
orthogonal). It is given by 
\end{subequations}
\begin{equation}
f_{R}\left( x|\beta ,q\right) \allowbreak =\allowbreak \frac{\sqrt{1-q}%
\left( \beta ^{2},q\right) _{\infty }}{2\pi \sqrt{4-(1-q)x^{2}}\left( \beta
,\beta q\right) _{\infty }}\prod_{k=0}^{\infty }\frac{\left(
(1+q^{k})^{2}-(1-q)x^{2}q^{k}\right) }{\left( (1+\beta q^{k})^{2}-(1-q)\beta
x^{2}q^{k}\right) }.  \label{fR}
\end{equation}%
defined also for $\left\vert q\right\vert <1$ and $\left\vert x\right\vert <%
\frac{2}{\sqrt{1-q}}$ and $\left\vert \beta \right\vert <1$. This
distributions is closely related to distributions \ $q-$N and \ $\left(
y,\rho ,q\right) -$CN. Namely we have the following.

\begin{remark}
\label{fRifN}Note that we have 
\begin{equation}
f_{R}\left( x|\beta ,q\right) \allowbreak =\allowbreak f_{N}\left(
x|q\right) \allowbreak \times \allowbreak \frac{\left( \beta ^{2}\right)
_{\infty }}{\left( \beta ,\beta q\right) _{\infty }\prod_{k=0}^{\infty
}\left( (1+\beta q^{k})^{2}-(1-q)\beta x^{2}q^{k}\right) }.  \label{fRnafN}
\end{equation}%
We also have $f_{CN}\left( x|x,\rho ,q\right) \allowbreak =\allowbreak
f_{R}\left( x|\rho ,q\right) /(1-\rho ),$ since 
\begin{equation*}
(1-\rho ^{2}q^{2k})^{2}\allowbreak -\allowbreak (1-q)\rho q^{k}(1+\rho
^{2}q^{2k})x^{2}\allowbreak +\allowbreak 2(1-q)\rho
^{2}x^{2}q^{2k}\allowbreak =\allowbreak \left( 1-\rho q^{k}\right)
^{2}(\left( 1+\rho q^{k}\right) ^{2}\allowbreak -\allowbreak (1-q)\rho
x^{2}q^{k})
\end{equation*}%
and 
\begin{equation*}
\lim_{\beta \rightarrow 1^{-}}f_{R}\left( x|\beta ,q\right) \allowbreak
=\allowbreak \frac{\sqrt{1-q}}{2\pi \sqrt{4-(1-q)x^{2}}}.
\end{equation*}
\end{remark}

\subsection{Polynomials\label{polynomials}}

Recall that every family of orthogonal polynomials is defined by $3-$ term
recursive relationship. The three families of orthogonal polynomials that
appear in connection with densities $f_{N},$ $f_{CN},$ $f_{R}$ are defined
by the following recursive relationships: 
\begin{eqnarray}
H_{n+1}\left( x|q\right) &=&xH_{n}\left( x|q\right) -\left[ n\right]
_{q}H_{n-1}\left( x|q\right) ,  \label{H} \\
R_{n+1}\left( x|\beta ,q\right) &=&\left( 1-\beta q^{n}\right) xR_{n}\left(
x|\beta ,q\right)  \label{R} \\
&&-\left( 1-\beta ^{2}q^{n-1}\right) \left[ n\right] _{q}R_{n-1}\left(
x|\beta ,q\right) ,  \notag \\
P_{n+1}\left( x|y,\rho ,q\right) &=&(x-\rho yq^{n})P_{n}(x|y,\rho ,q)
\label{Al} \\
&&-(1-\rho ^{2}q^{n-1})[n]_{q}P_{n-1}(x|y,\rho ,q),  \notag
\end{eqnarray}%
with $H_{-1}\left( x|q\right) \allowbreak =\allowbreak R_{-1}\left( x|\beta
,q\right) \allowbreak =\allowbreak P_{-1}\left( x|y,\rho ,q\right)
\allowbreak =\allowbreak 0,$ $H_{0}\left( x|q\right) \allowbreak
=\allowbreak R_{0}\left( x|\beta ,q\right) \allowbreak =\allowbreak
P_{0}\left( x|y,\rho ,q\right) \allowbreak =\allowbreak 1$.

Here parameters $\beta ,\rho ,y,q$ have the following bounds: $\left\vert
q\right\vert \leq 1,$ $\left\vert \beta \right\vert ,\left\vert \rho
\right\vert <1,$ $y\in \mathbb{R}.$ The family (\ref{H}) will be referred to
as the family of $q-$Hermite polynomials, family (\ref{R}) will be referred
to as the family of Rogers polynomials. Finally family (\ref{Al}) will be
referred to as the family of Al-Salam--Chihara polynomials.

In fact in the literature (see e.g. \cite{AAR}) more popular are these
families transformed. Namely as the $q-$Hermite polynomials often function
polynomials 
\begin{equation}
h_{n}\left( x|q\right) \allowbreak =\allowbreak \left( 1-q\right)
^{n/2}H_{n}\left( \frac{2x}{\sqrt{1-q}}|q\right) ,n\geq 1  \label{contH}
\end{equation}%
often called also continuous $q-$Hermite polynomials. As Rogers polynomials
function polynomials: 
\begin{equation}
C_{n}\left( x|\beta ,q\right) \allowbreak =\allowbreak \left( q\right)
_{n}\left( 1-q\right) ^{n/2}R_{n}\left( \frac{2x}{\sqrt{1-q}}|\beta
,q\right) ,n\geq 1.  \label{q-US}
\end{equation}%
Finally as Al-Salam--Chihara polynomials function polynomials: 
\begin{equation}
p_{n}(x|a,b,q)\allowbreak =\allowbreak \left( 1-q\right) ^{n/2}P_{n}\left( 
\frac{2x}{\sqrt{1-q}}|\frac{2a}{\sqrt{\left( 1-q\right) b}},\sqrt{b}%
,q\right) ,  \label{contASC}
\end{equation}%
for $\left\vert \beta \right\vert <1,a^{2}>b\geq 0$ or even (see e.g. \cite%
{Is2005}): 
\begin{equation}
Q_{n+1}\left( x|a,b,q\right) \allowbreak =\allowbreak
(2x-(a+b)q^{n})Q_{n}\left( x|a,b,q\right)
-(1-abq^{n-1})(1-q^{n})Q_{n-1}(x|a,b,q),  \label{AlSC1}
\end{equation}%
with $Q_{-1}\left( x|a,b,q\right) \allowbreak =\allowbreak 0,$ $Q_{0}\left(
x|a,b,q\right) \allowbreak =\allowbreak 1$ related to polynomials $P_{n}$ by
:%
\begin{equation*}
P_{n}\left( x|y,\rho ,q\right) \allowbreak =\allowbreak Q_{n}\left( x\sqrt{%
1-q}/2|\frac{\sqrt{1-q}}{2}\rho (y\allowbreak -\allowbreak i\sqrt{\frac{4}{%
1-q}-y^{2}}),\frac{\sqrt{1-q}}{2}\rho (y\allowbreak +\allowbreak i\sqrt{%
\frac{4}{1-q}-y^{2}}),q\right) /\left( 1-q\right) ^{n/2}.
\end{equation*}%
For our purposes, closely connected with probability, the families defined
by (\ref{H}), (\ref{R}) and (\ref{Al}) are more suitable.

The families of polynomials $\left\{ H_{n}\right\} _{\geq -1},\left\{
P_{n}\right\} _{n\geq -1}$ and $\left\{ R_{n}\right\} _{\geq -1}$ have the
following basic properties:

\begin{lemma}
$\forall -1<q\leq 1,\left\vert \rho \right\vert ,\left\vert \beta
\right\vert <1,y\in S\left( q\right) $ we have $\forall n\geq 0$:%
\begin{equation}
\int_{S\left( q\right) }H_{n}\left( x|q\right) H_{m}\left( x|q\right)
f_{N}\left( x|q\right) dx=\left\{ 
\begin{array}{ccc}
0 & when & n\neq m \\ 
\left[ n\right] _{q}! & when & n=m%
\end{array}%
\right. ,  \label{z H}
\end{equation}

\begin{equation}
\int_{S\left( q\right) }H_{n}\left( x|q\right) f_{CN}\left( x|y,\rho
,q\right) dx=\rho ^{n}H_{n}\left( y|q\right) ,  \label{z H2}
\end{equation}

\begin{equation}
\int_{S\left( q\right) }P_{n}\left( x|y,\rho ,q\right) P_{m}\left( x|y,\rho
,q\right) f_{CN}\left( x|y,\rho ,q\right) dx=\left\{ 
\begin{array}{ccc}
0 & when & n\neq m \\ 
\left( \rho ^{2}\right) _{n}\left[ n\right] _{q}! & when & n=m%
\end{array}%
\right. ,  \label{z p}
\end{equation}

\begin{equation}
\int_{S\left( q\right) }R_{n}\left( x|\beta ,q\right) R_{n}\left( x|\beta
,q\right) f_{R}\left( x|\beta ,q\right) dx\allowbreak =\allowbreak \left\{ 
\begin{array}{ccc}
0 & when & n\neq m \\ 
\frac{\left( 1-\beta \right) \left( \beta ^{2}\right) _{n}[n]_{q}!}{\left(
1-\beta q^{n}\right) } & when & n=m%
\end{array}%
\right. ,  \label{z R}
\end{equation}

\begin{equation}
\forall \left\vert \rho _{1}\right\vert ,\left\vert \rho _{2}\right\vert
<1:\int_{S\left( q\right) }f_{CN}\left( x|y,\rho _{1},q\right) f_{CN}\left(
y|z,\rho _{2},q\right) dy=f_{CN}\left( x|z,\rho _{1}\rho _{2},q\right) .
\label{chapman}
\end{equation}

\begin{equation}
\max_{x\in S\left( q\right) }\left\vert H_{n}\left( x|q\right) \right\vert
\leq \frac{W_{n}\left( q\right) }{(1-q)^{n/2}},\max_{x\in S\left( q\right)
}\left\vert R_{n}\left( x|\beta ,q\right) \right\vert \leq \frac{V_{n}\left(
q,\beta \right) }{\left( q\right) _{n}(1-q)^{n/2}},  \label{maksima}
\end{equation}%
where 
\begin{equation}
W_{n}\left( q\right) \allowbreak =\allowbreak \sum_{i=0}^{n}\QATOPD[ ] {n}{i}%
_{q},V_{n}\left( q,\beta \right) \allowbreak =\allowbreak \sum_{i=0}^{n}%
\frac{\left( \beta |q\right) _{i}\left( \beta |q\right) _{n-i}}{\left(
q|q\right) _{i}\left( q|q\right) _{n-i}}.  \label{WandV}
\end{equation}
\end{lemma}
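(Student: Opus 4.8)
The plan is to treat the four orthogonality/norm relations, the conditional-moment identity, the Chapman--Kolmogorov formula and the sup-norm bounds as four tasks, reducing the last three to the first. For the orthogonality relations \eqref{z H}, \eqref{z p}, \eqref{z R} I would invoke the general theory of orthogonal polynomials. Each family \eqref{H}, \eqref{R}, \eqref{Al} is given by a three--term recurrence; rewriting it as $x p_{n}=\alpha_{n}p_{n+1}+\beta_{n}p_{n}+\gamma_{n}p_{n-1}$ one reads off the coefficients directly. Assuming (as is standard for these $q$--distributions, and verifiable via the $q$--analogue of the Gaussian integral) that each density integrates to $1$, so that $\hat{a}_{0}=1$, the squared norms are pinned down by the cross--multiplication identity $\alpha_{n}\hat p_{n+1}=\gamma_{n+1}\hat p_{n}$, obtained by integrating $x p_{n}p_{n+1}$ two ways against the density. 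For the monic families \eqref{H}, \eqref{Al} one has $\alpha_{n}=1$, so telescoping gives $\prod_{k=1}^{n}[k]_{q}=[n]_{q}!$ for $H_{n}$ and $\prod_{k=1}^{n}(1-\rho^{2}q^{k-1})[k]_{q}=(\rho^{2})_{n}[n]_{q}!$ for $P_{n}$; after converting \eqref{R} to monic form the same bookkeeping yields $(1-\beta)(\beta^{2})_{n}[n]_{q}!/(1-\beta q^{n})$ for $R_{n}$. Positivity of every $\gamma_{n}$, guaranteed by $|\beta|,|\rho|<1$ and $-1<q\le1$, is exactly Favard's condition. The one genuinely analytic point is the identification of the orthogonalising measure with the stated infinite--product density; here I would either match moments or cite the classical identifications in \cite{Is2005}, \cite{AAR}.

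For the conditional--moment formula \eqref{z H2} I would expand the $q$--Hermite polynomial in the Al--Salam--Chihara basis. A purely algebraic induction on the recurrences \eqref{H} and \eqref{Al} yields the connection formula $H_{n}(x|q)=\sum_{j=0}^{n}\binom{n}{j}_{q}\rho^{n-j}H_{n-j}(y|q)P_{j}(x|y,\rho,q)$, whose $j=0$ coefficient is $\rho^{n}H_{n}(y|q)$ (the base case $n=1$ reduces to $\rho y+(x-\rho y)=x$). Integrating against $f_{CN}$ and using \eqref{z p} with $P_{0}\equiv1$ annihilates every term with $j\ge1$, leaving exactly $\rho^{n}H_{n}(y|q)$. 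I would deliberately argue this way, rather than from a Poisson--Mehler expansion, to avoid circularity, since that expansion is itself one of the paper's intended outputs.

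The Chapman--Kolmogorov identity \eqref{chapman} I would then obtain from \eqref{z H2} applied twice. Integrating $H_{n}(x|q)$ against the left--hand side, interchanging the order of integration (Fubini is legitimate since everything is supported on the compact $S(q)$), and applying \eqref{z H2} first in $x$ and then in $y$ produces $\rho_{1}^{n}\rho_{2}^{n}H_{n}(z|q)=(\rho_{1}\rho_{2})^{n}H_{n}(z|q)$, which by \eqref{z H2} is precisely the integral of $H_{n}$ against $f_{CN}(x|z,\rho_{1}\rho_{2},q)$. Since both sides are supported on $S(q)$ and the $q$--Normal law is moment--determinate there, agreement of all integrals against the $H_{n}$, equivalently against all powers of $x$, forces equality almost everywhere.

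Finally, for the bounds \eqref{maksima} I would use the explicit trigonometric representations of the rescaled polynomials \eqref{contH}, \eqref{q-US}: writing $x=\cos\theta$ one has $h_{n}(\cos\theta|q)=\sum_{k=0}^{n}\binom{n}{k}_{q}e^{i(n-2k)\theta}$ and a similar expansion for $C_{n}$ with coefficients $(\beta)_{k}(\beta)_{n-k}/((q)_{k}(q)_{n-k})$. The triangle inequality then gives $|h_{n}|\le W_{n}(q)$ and $|C_{n}|\le V_{n}(q,\beta)$ on $[-1,1]$, with equality at $\theta=0$, and undoing the normalisations \eqref{contH}, \eqref{q-US} over $S(q)$ yields exactly \eqref{maksima} with $W_{n}$, $V_{n}$ as in \eqref{WandV}. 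I expect the main obstacles to be the measure identification in the first step and the clean inductive verification of the connection formula underlying \eqref{z H2}; once those are secured, \eqref{chapman} and \eqref{maksima} are essentially bookkeeping.
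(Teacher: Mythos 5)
Your proposal is correct, and on the computational core it coincides with what the paper actually does: the norms in (\ref{z H}), (\ref{z p}), (\ref{z R}) are obtained in the paper by precisely your cross-multiplication device (multiply the recurrences (\ref{Al}) and (\ref{R}) by $P_{n\pm 1}$, resp. $R_{n\pm 1}$, integrate against the density, equate, and telescope -- the paper's recursions $B_{n}=(1-\rho^{2}q^{n-1})[n]_{q}B_{n-1}$ and $A_{n}=\frac{1-\beta q^{n-1}}{1-\beta q^{n}}(1-\beta^{2}q^{n-1})[n]_{q}A_{n-1}$ are exactly your telescoping factors), and (\ref{maksima}) is likewise read off from the explicit Fourier-type representations of $h_{n}$ and $C_{n}$ (formulae 13.1.10 and 13.2.16 of \cite{Is2005}) plus the triangle inequality; both you and the paper outsource to \cite{Is2005}/\cite{AAR} the identification of the infinite-product densities as the orthogonalizing measures, so you are at the same level of rigor there. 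Where you genuinely diverge is on (\ref{z H2}) and (\ref{chapman}): the paper disposes of both by citation ((\ref{z H2}) to \cite{bryc1}/\cite{bms}, with the alternative of deducing it from (\ref{PM}); (\ref{chapman}) to \cite{bryc1}, \cite{bryc2}, \cite{Bo}), whereas you prove them -- (\ref{z H2}) by expanding $H_{n}$ in the $P_{j}$ basis and annihilating all terms $j\geq 1$ by orthogonality, which is exactly the paper's own general expansion principle applied to the connection formula of Lemma \ref{znane} iii), and (\ref{chapman}) by Fubini, two applications of (\ref{z H2}), and moment determinacy, which is sound since everything lives on the compact set $S(q)$ where measures are determined by their moments and the densities are continuous. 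Your caution about circularity with (\ref{PM}) is well placed, since the paper re-derives (\ref{PM}) later as (\ref{CNnaN}) by the expansion method, although the paper's external citation already breaks that circle. What your route buys is self-containedness and a clean logical ordering; what it costs is that the inductive proof of the connection formula $H_{n}(x|q)=\sum_{j}\QATOPD[ ] {n}{j}_{q}\rho^{n-j}H_{n-j}(y|q)P_{j}(x|y,\rho,q)$ is asserted rather than carried out -- that induction is routine but nontrivial bookkeeping, and if you do not want to perform it you should cite formula (4.7) of \cite{IRS99}, as the paper does. I see no genuine gap beyond these two deliberately outsourced ingredients.
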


\begin{proof}
(\ref{z H}), follows from (13.1.11) of \cite{Is2005} after necessary
normalization, \ref{z H2} is given in \cite{bryc1}, or \cite{bms}, however
can be also deduced from (\ref{PM}) below. To prove (\ref{z p}) and (\ref{z
R}) one can use \cite{AAR} or \cite{Is2005} where these formulae are proved
with different normalization (in fact for polynomial $Q_{n}$ and $C_{n}$
defined by (\ref{AlSC1} and (\ref{q-US} respectively)). Below we show it in
an elementary way using standard knowledge on orthogonal polynomials and
formulae (\ref{Al}) and (\ref{R}). Firstly let us denote 
\begin{eqnarray*}
A_{n}\allowbreak &=&\allowbreak \int_{S\left( q\right) }R_{n}^{2}(x|\beta
,q)f_{R}\left( x|\beta ,q\right) dx, \\
B_{n}\allowbreak &=&\allowbreak \int_{S\left( q\right) }P_{n}^{2}\left(
x|y,\rho ,q\right) f_{CN}\left( x|y,\rho ,q\right) dx.
\end{eqnarray*}%
Further we multiply both sides (\ref{Al}) and (\ref{R}) once respectively by 
$P_{n-1}\left( x|y,\rho ,q\right) $ and $R_{n-1}(x|\beta ,q)$ and then by $%
P_{n+1}\left( x|y,\rho ,q\right) $ and $R_{n+1}\left( x|\beta ,q\right) $
and integrate respectively with respect. $f_{CN}$ and $f_{R}$ over $S\left(
q\right) $ obtaining respectively 
\begin{equation*}
\int_{S\left( q\right) }x\allowbreak \allowbreak P_{n}\left( x|y,\rho
,q\right) \allowbreak \allowbreak P_{n-1}\left( x|y,\rho ,q\right)
\allowbreak \allowbreak f_{CN}\left( x|y,\rho ,q\right) dx\allowbreak
=\allowbreak (1-\rho ^{2}q^{n-1})[n]_{q}B_{n-1}
\end{equation*}%
and 
\begin{equation*}
\left( 1-\beta q^{n}\right) \int_{S\left( q\right) }x\allowbreak \allowbreak
R_{n}\left( x|\beta ,q\right) \allowbreak \allowbreak R_{n-1}\left( x|\beta
,q\right) \allowbreak \allowbreak f_{R}\left( x|\beta ,q\right)
dx\allowbreak =\allowbreak (1-\beta ^{2}q^{n-1})\left[ n\right] _{q}A_{n-1}
\end{equation*}%
and then 
\begin{equation*}
B_{n+1}\allowbreak =\allowbreak \int_{S\left( q\right) }xP_{n+1}\left(
x|y,\rho ,q\right) P_{n}\left( x|y,\rho ,q\right) f_{CN}\left( x|y,\rho
,q\right) dx
\end{equation*}%
and 
\begin{equation*}
A_{n+1}\allowbreak =\allowbreak \left( 1-\beta q^{n}\right) \int_{S\left(
q\right) }x\allowbreak \allowbreak R_{n}\left( x|\beta ,q\right) \allowbreak
\allowbreak R_{n+1}\left( x|\beta ,q\right) \allowbreak \allowbreak
f_{R}\left( x|\beta ,q\right) dx.
\end{equation*}%
From these equations we deduce that respectively 
\begin{equation*}
B_{n}\allowbreak \allowbreak =(1-\rho ^{2}q^{n-1})[n]_{q}\allowbreak B_{n-1}
\end{equation*}%
and $\allowbreak $%
\begin{equation*}
A_{n}\allowbreak =\allowbreak \frac{1-\beta q^{n-1}}{1-\beta q^{n}}\left(
1-\beta ^{2}q^{n-1}\right) \left[ n\right] _{q}A_{n-1}
\end{equation*}%
from which follows (\ref{z R}) and (\ref{z p}).

Formula (\ref{chapman}) is taken from \cite{bryc1} and \cite{bryc2}. It can
be also found in \cite{Bo}.\newline
Formula (\ref{maksima}) follows formulae 13.1.10 and 13.2.16 of \cite{Is2005}
and (\ref{contH}) and (\ref{q-US}).
\end{proof}

We will also use the already mentioned Chebyshev polynomials of the first $%
T_{n}\left( x\right) $ defined by $T_{n}\left( \cos \theta \right)
\allowbreak =\allowbreak \cos n\theta $ and second kind $U_{n}\left(
x\right) $ defined by $U_{n}\left( \cos \theta \right) \allowbreak
=\allowbreak \frac{\sin \left( n+1\right) \theta }{\sin \theta }$ and
ordinary (probabilistic) Hermite polynomials $H_{n}\left( x\right) $ i.e.
polynomials orthogonal with respect to $\frac{1}{\sqrt{2\pi }}\exp
(-x^{2}/2).$ Recall that Chebyshev polynomials were defined in Subsection %
\ref{przyklady} Example 1 and satisfy $3-$term recurrence (\ref{_0}), while
polynomials $H_{n}$ satisfy $3-$term recurrence (\ref{_1}) below. 
\begin{equation}
xH_{n}\left( x\right) =H_{n+1}\left( x\right) +nH_{n-1}  \label{_1}
\end{equation}
$H_{0}\left( x\right) \allowbreak =\allowbreak 1,$ $H_{1}\left( x\right)
\allowbreak =\allowbreak x$. Moreover we will be using re-scaled versions of
polynomials $T_{n}$ and $U_{n}$ that is 
\begin{eqnarray*}
\hat{T}_{n}\left( x|q\right) \allowbreak &=&\allowbreak T_{n}\left( x\sqrt{%
1-q}/2\right) /\left( 1-q\right) ^{n/2}, \\
\hat{U}_{n}\left( x|q\right) \allowbreak &=&\allowbreak U_{n}\left( x\sqrt{%
1-q}/2\right) /\left( 1-q\right) ^{n/2}.
\end{eqnarray*}%
These modified polynomials are orthogonal with respect to modified densities
that appear in the context of Chebyshev polynomials. That is we have 
\begin{eqnarray*}
\int_{S\left( q\right) }\hat{U}_{n}\left( x|q\right) \hat{U}_{m}\left(
x|q\right) f_{U}\left( x|q\right) dx\allowbreak &=&\allowbreak \delta _{mn},
\\
\int_{S\left( q\right) }\hat{T}_{n}\left( x|q\right) \hat{T}_{m}\left(
x|q\right) f_{T}\left( x|q\right) dx\allowbreak &=&\allowbreak \delta
_{mn}/2,
\end{eqnarray*}%
if $n\vee m\geq 1$ and $1$ if $n\allowbreak =\allowbreak m\allowbreak
=\allowbreak 0$, where we denoted 
\begin{eqnarray}
f_{U}\left( x|q\right) \allowbreak &=&\allowbreak I_{S\left( q\right)
}\left( x\right) \sqrt{\left( 1-q\right) \left( 4-\left( 1-q\right)
x^{2}\right) }/2\pi ,  \label{fU} \\
f_{T}\left( x|q\right) \allowbreak &=&\allowbreak I_{S\left( q\right)
}\left( x\right) /(\sqrt{\left( 1-q\right) /\left( 4-\left( 1-q\right)
x^{2}\right) }\pi ).  \label{fT}
\end{eqnarray}%
The density $f_{U}$ functions sometimes in the literature as the density of
Wigner distribution with radius $2/\sqrt{1-q}$ or the density of the
semicircle distribution. The density $f_{T}$ is often called the density of
the arcsine distribution.

In the sequel there will also appear distribution $f_{CN}\left( x|y,\rho
,0\right) $ re-scaled in the following way%
\begin{equation}
f_{K}\left( x|y,\rho ,q\right) \allowbreak =\allowbreak \frac{\left( 1-\rho
^{2}\right) \sqrt{1-q}\sqrt{4-(1-q)x^{2}}}{2\pi \left( \left( 1-\rho
^{2}\right) ^{2}-\rho (1-q)\left( 1+\rho ^{2}\right) xy+(1-q)\rho ^{2}\left(
x^{2}+y^{2}\right) \right) }I_{S\left( q\right) }\left( x\right) ,
\label{fK}
\end{equation}%
for $-1<q\leq 1,$ $\left\vert \rho \right\vert <1,$ $y\in S\left( q\right) $%
, that is a particular case of so called Kesten--McKay distribution and
which is nothing else but re-scaled density $A\left( x\right) $ considered
above in Example 2.

We have Proposition that relates cases defined by special values of
parameters to known families of polynomials or distributions:

\begin{proposition}
\label{uwaga}

$1.$ 
\begin{gather*}
f_{CN}\left( x|y,0,q\right) \allowbreak =\allowbreak f_{R}\left(
x|0,q\right) \allowbreak =\allowbreak f_{N}(x|q)\allowbreak =\allowbreak \\
f_{U}\left( x|q\right) \left( q\right) _{\infty }\allowbreak \times
\allowbreak \prod_{k=1}^{\infty }\left( (1+q^{k})^{2}-(1-q)x^{2}q^{k}\right)
,
\end{gather*}

$2.$ $\forall n\geq 0:$%
\begin{gather*}
R_{n}\left( x|0,q\right) \allowbreak =\allowbreak H_{n}\left( x|q\right)
,~~H_{n}\left( x|0\right) =U_{n}\left( x/2\right) , \\
H_{n}\left( x|1\right) \allowbreak =\allowbreak H_{n}\left( x\right)
,~~\lim_{\beta \longrightarrow 1^{-}}\frac{R_{n}\left( x|\beta ,q\right) }{%
\left( \beta \right) _{n}}\allowbreak =\allowbreak 2\frac{T_{n}\left( x\sqrt{%
1-q}/2\right) }{(1-q)^{n/2}},
\end{gather*}

$3.$ $\forall n\geq 0:$%
\begin{eqnarray*}
P_{n}\left( x|x,\rho ,q\right) \allowbreak &=&\allowbreak R_{n}\left( x|\rho
,q\right) \allowbreak ,~~P_{n}\left( x|y,0,q\right) =H_{n}(x|q), \\
P_{n}(x|y,\rho ,1)\allowbreak &=&\allowbreak (1-\rho ^{2})^{n/2}H_{n}\left( 
\frac{x-\rho y}{\sqrt{1-\rho ^{2}}}\right) , \\
P_{n}\left( x|y,\rho ,0\right) \allowbreak &=&\allowbreak U_{n}\left(
x/2\right) \allowbreak -\allowbreak \rho yU_{n-1}\left( x/2\right)
\allowbreak +\allowbreak \rho ^{2}U_{n-2}\left( x/2\right) \allowbreak 
\overset{df}{=}\allowbreak k_{n}\left( x|y,\rho \right) .
\end{eqnarray*}

$4.$ relationship (\ref{z H2}) reduces for $\rho =0$ to relationship (\ref{z
H}) with $m=0,$

$5.$ 
\begin{equation*}
f_{N}\left( x|0\right) =\frac{1}{2\pi }\sqrt{4-x^{2}}I_{<-2,2>}\left(
x\right) ,~~f_{N}\left( x|1\right) \allowbreak =\allowbreak \frac{1}{\sqrt{%
2\pi }}\exp \left( -x^{2}/2\right) ,~~f_{R}\left( x|1,q\right) \allowbreak
=\allowbreak f_{T}\left( x|q\right) ,
\end{equation*}

$6.$%
\begin{equation*}
f_{CN}\left( x|y,\rho ,0\right) \allowbreak =\allowbreak f_{K}\left(
x|y,\rho \right) ,~~f_{CN}\left( x|y,\rho ,1\right) \allowbreak =\allowbreak 
\frac{1}{\sqrt{2\pi (1-\rho ^{2})}}\exp \left( -\frac{\left( x-\rho y\right)
^{2}}{2\left( 1-\rho ^{2}\right) }\right) .
\end{equation*}
\end{proposition}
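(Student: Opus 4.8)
The plan is to verify each of the six numbered assertions by one of two routines: either (a) substituting the distinguished parameter value directly into the infinite-product density formulas \eqref{qN}, \eqref{fCN}, \eqref{fR}, \eqref{fK} and simplifying, or (b) substituting the parameter value into the three-term recurrences \eqref{H}, \eqref{R}, \eqref{Al} and matching against the recurrences \eqref{_0}, \eqref{_1} that characterize the Chebyshev and Hermite families. Since orthogonal polynomials are uniquely determined by their three-term recurrence together with $p_{-1}\equiv 0$, $p_0\equiv 1$, each polynomial identity reduces to checking that the coefficients agree after the specialization. The density identities reduce to telescoping or collapsing of the $q$-Pochhammer products.

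\textbf{The polynomial assertions (parts 2, 3).}
For these I would argue by induction on $n$ via the recurrences. For $R_n(x|0,q)=H_n(x|q)$, setting $\beta=0$ in \eqref{R} gives exactly \eqref{H}, and the initial data coincide, so the two families are identical. For $H_n(x|0)=U_n(x/2)$, setting $q=0$ in \eqref{H} yields $H_{n+1}=xH_n-H_{n-1}$ (since $[n]_0=1$ for $n\ge 1$ and $[0]_0=0$), which matches \eqref{_0} under the substitution $x\mapsto 2\cos\theta$; the initial conditions match as well. For $H_n(x|1)=H_n(x)$, setting $q=1$ gives $[n]_1=n$ in \eqref{H}, reproducing \eqref{_1}. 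The Rogers-to-Chebyshev-of-the-first-kind limit $\lim_{\beta\to 1^-}R_n(x|\beta,q)/(\beta)_n$ requires dividing \eqref{R} by $(\beta)_{n+1}$, taking $\beta\to 1^-$, and identifying the resulting recurrence with that of $\hat T_n$; I would keep careful track of the normalizing constant $2$ coming from the $n=1$ boundary behavior of $T_n$ versus $U_n$. Part 3 is handled the same way: $P_n(x|x,\rho,q)=R_n(x|\rho,q)$ follows because setting $y=x$ in \eqref{Al} makes the coefficient $x-\rho x q^n$ match $(1-\rho q^n)x$ only after recognizing $y=x$, so in fact the claim should be read with $y$ evaluated at $x$; I would substitute and compare directly. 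The cases $\rho=0$, $q=1$, $q=0$ of \eqref{Al} are routine specializations, the last reproducing the definition of $k_n$ already recorded in Example 2.

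\textbf{The density assertions (parts 1, 5, 6) and parts that bridge the two (part 4).}
For part 1 I would set $\rho=0$ in \eqref{fCN} and $\beta=0$ in \eqref{fR}; in both cases every factor in the $k$-indexed denominator product collapses to $1$ (the $\rho$- and $\beta$-dependent terms vanish), leaving exactly $f_N(x|q)$, and the final product form follows by pulling the $k=0$ factor out of the product in \eqref{qN} and combining it with $\sqrt{4-(1-q)x^2}$ to produce $f_U(x|q)$ times $(q)_\infty$ times the tail product. Part 5 specializes $q=0$ and $q=1$ in \eqref{qN}: at $q=0$ the infinite product truncates (only $k=0$ survives nontrivially, since $q^k=0$ for $k\ge 1$ gives factor $1$) to recover the semicircle density, while $q\to 1^-$ requires the known limiting passage to the Gaussian, which I would cite rather than rederive; $f_R(x|1,q)=f_T(x|q)$ follows from the $\beta\to 1^-$ limit already recorded in Remark \ref{fRifN}. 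Part 6 is the $q=0$ and $q=1$ specialization of \eqref{fCN}, the former matching \eqref{fK} by definition and the latter giving the bivariate-normal conditional density. Part 4 is immediate: setting $\rho=0$ in \eqref{z H2} gives $\int H_n f_{CN}(x|y,0,q)\,dx=0^n H_n(y|q)$, which is $0$ for $n\ge 1$ and $1$ for $n=0$, exactly \eqref{z H} with $m=0$.

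\textbf{Anticipated obstacle.}
The main delicacy is the two limiting passages $\beta\to 1^-$ (for the Rogers$\to T_n$ normalization) and $q\to 1^-$ (for the $q$-Normal $\to$ Gaussian convergence in parts 5 and 6). These are genuine limits, not algebraic specializations, so one must justify interchanging the limit with the infinite product and track the normalizing constants (the factor $2$ in part 2, and the $(1-q)^{n/2}$ rescalings tying $H_n(\cdot|q)$ to the classical $H_n$). I expect the cleanest route for the $q\to 1^-$ density limits is to invoke the standard convergence of the $q$-Gaussian to the Gaussian from the cited literature rather than manipulating the infinite product directly, reserving direct product manipulation for the purely algebraic cases $q=0$ and $\rho=\beta=0$ where every factor simplifies term by term.
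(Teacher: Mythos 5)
Your proposal is correct and, apart from the two limiting passages, coincides with the paper's own proof: the paper likewise handles parts 1, 4 and all the $q=0$, $\rho=0$, $\beta=0$ cases by collapsing the infinite products in (\ref{qN}), (\ref{fCN}), (\ref{fR}) and by matching the specialized recurrences (\ref{R}), (\ref{Al}) against (\ref{H}), (\ref{_0}), (\ref{_1}), including the same substitution $x\rightarrow (x-\rho y)/\sqrt{1-\rho ^{2}}$ with rescaling by $(1-\rho ^{2})^{(n+1)/2}$ for $P_{n}(x|y,\rho ,1)$ and the same inspection of $P_{-1},P_{0},P_{1},P_{2}$ at $q=0$. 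The differences lie in the limits, and they run in opposite directions. For $\lim_{\beta \rightarrow 1^{-}}R_{n}(x|\beta ,q)/(\beta )_{n}$ the paper simply cites formula 13.2.15 of \cite{Is2005}, whereas you derive it from the recurrence; your derivation works (dividing (\ref{R}) by $(\beta )_{n+1}$ yields $S_{n+1}=xS_{n}-c_{n}S_{n-1}$ with $c_{n}\rightarrow 1/(1-q)$ for $n\geq 2$ but $c_{1}\rightarrow 2/(1-q)$, which is exactly the factor $2$ you promise to track) and is more self-contained. Conversely, for the $q=1$ entries of parts 5 and 6 you would cite the literature for the $q$-Gaussian-to-Gaussian limit, while the paper stays internal: it observes that, by parts 2 and 3, the polynomials tend to (rescaled) Hermite polynomials as $q\rightarrow 1^{-}$, infers that the orthogonalizing measures then converge weakly to $N(0,1)$ and $N(\rho y,1-\rho ^{2})$, and on that basis \emph{defines} $f_{N}(x|1)$ and $f_{CN}(x|y,\rho ,1)$ to be these Gaussian densities; that is, the $q=1$ identities are treated as natural definitions justified by the polynomial limits (the products (\ref{qN}) and (\ref{fCN}) are meaningless at $q=1$), not as theorems requiring an interchange of limit and infinite product. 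Your reading of them as ``genuine limits, not algebraic specializations'' is exactly right; the paper's device buys independence from outside convergence results, at the price of leaving the weak-convergence step (which rests on determinacy of the Hermite moment problem) asserted rather than proved, while your route defers that same work to the cited literature.
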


\begin{proof}
$1.$ is obvious. $2.$ follows observation that (\ref{H}) simplifies to (\ref%
{_0}) and (\ref{_1}) for $q\allowbreak =\allowbreak 0$ and $q\allowbreak
=\allowbreak 1$ respectively while (\ref{R}) simplifies to (\ref{H}). Value $%
\lim_{\beta \longrightarrow 1^{-}}\frac{R_{n}\left( x|\beta ,q\right) }{%
\left( \beta \right) _{n}}$ can be found in \cite{Is2005} , formula 13.2.15. 
\newline
$3.$ First three assertions follow either direct observation in the case of $%
P_{n}\left( x|y,\rho ,0\right) $ or comparison of (\ref{Al}) and (\ref{_1})
considered for substitution $x\longrightarrow (x-\rho y)/\sqrt{1-\rho ^{2}}$
and then multiplication of both sides by $\left( 1-\rho ^{2}\right)
^{(n+1)/2}\ $third assertion follows following observations: $P_{-1}\left(
x|y,\rho ,0\right) \allowbreak =\allowbreak 0,$ $P_{0}\left( x|y,\rho
,0\right) =1,$ $P_{1}\left( x|y,\rho ,0\right) \allowbreak =\allowbreak
x-\rho y$ , $P_{2}\left( x|y,\rho ,0\right) \allowbreak =\allowbreak x\left(
x-\rho y\right) \allowbreak -\allowbreak \left( 1-\rho ^{2}\right) ,$ $%
P_{n+1}\left( x|y,\rho ,0\right) \allowbreak =\allowbreak xP_{n}\left(
x|y,\rho ,0\right) \allowbreak -\allowbreak P_{n-1}\left( x|y,\rho ,0\right) 
$ for $n\geq 2$ which is equation (\ref{_0}). $5.$ and $6.$ Their first
assertions are obvious. Secondly we notice that passing to the limit $%
q\longrightarrow 1^{-}$ and applying $2.$ and $3.$ we obtain well known
relationships defining Hermite polynomials. Hence Hermite polynomials are
orthogonal with respect to the measure defined by $f_{N}\left( x|1\right) $.
Thus distributions defined by $f_{N}$ and $f_{CN}$ tend to normal $N\left(
0,1\right) $ and $N\left( \rho y,\left( 1-\rho ^{2}\right) \right) $
distributions weakly as $q\longrightarrow 1^{-}.$ So it is natural to define 
$f_{N}\left( x|1\right) $ and $f_{CN}\left( x|y,\rho ,q\right) $ as they are
in $5.$ and $6.$
\end{proof}

As suggested in Proposition \ref{uwaga} we will be using notation $%
k_{n}\left( x|y,\rho \right) \allowbreak $ instead $P_{n}\left( x|y,\rho
,0\right) $ which is simpler. Besides we have $k_{0}\left( x|y,\rho \right)
\allowbreak =\allowbreak 1,$ $k_{1}\left( x|y,\rho \right) \allowbreak
=\allowbreak x-\rho y,$ $k_{2}\left( x|y,\rho \right) \allowbreak
=\allowbreak x(x-\rho y\allowbreak )-\allowbreak (1-\rho ^{2})$ and $%
k_{n+1}\left( x|y,\rho \right) \allowbreak =\allowbreak xk_{n}\left(
x|y,\rho \right) \allowbreak -\allowbreak k_{n-1}\left( x|y,\rho \right) .$

\begin{remark}
\label{kesten}Since polynomials $\left\{ k_{n}\left( x|y,\rho \right)
\right\} _{n\geq 0}$ are orthogonal with respect to the measure with density 
$A\left( x\right) $ of Example $2,$ or more precisely with density $%
f_{K}\left( x|y,\rho ,0\right) ,$ we deduce (by simple change of variables
in appropriate integral) that polynomials $\left\{ k_{n}\left( x\sqrt{1-q}|y%
\sqrt{1-q},\rho \right) \right\} _{n\geq 0}$ are orthogonal with respect to $%
f_{K}\left( x|y,\rho ,q\right) .$
\end{remark}

Hence in particular $f_{N}$ is a generalization of $N\left( 0,1\right) $
density, while $f_{CN}$ is a generalization of $N\left( \rho y,1-\rho
^{2}\right) $ density. It is also known see e.g. \cite{bms} that $%
f_{CN}\left( x|y,\rho ,q\right) /f_{N}\left( x|q\right) \allowbreak $
follows Lancaster type expansion (see e.g. \cite{Lancaster}). Namely we have:

\begin{gather}
\prod_{k=0}^{\infty }\frac{(1-\rho ^{2}q^{k})}{(1-\rho
^{2}q^{2k})^{2}-(1-q)\rho q^{k}(1+\rho ^{2}q^{2k})xy+(1-q)\rho
^{2}(x^{2}+y^{2})q^{2k}}  \label{PM} \\
=\sum_{n=0}^{\infty }\frac{\rho ^{n}}{[n]_{q}!}H_{n}(x|q)H_{n}(y|q),  \notag
\end{gather}%
converges uniformly and defines the Poisson--Mehler kernel. It is an almost
obvious generalization of (\ref{MHPF}) and (\ref{PMq=0}). We will prove and
generalize it by the expansion idea of this paper in the next section.

\section{Auxiliary results\label{auxi}}

In this section we are going either to recall or to calculate connection
coefficients of one family of orthogonal polynomials with respect to the
others. First we will recall known results, exposing some of the families of
connection coefficients. To do this let us introduce one more family of
polynomials $\left\{ B_{n}\left( x|q\right) \right\} _{n\geq 0}$ that are
orthogonal but with respect to some complex measure. They play an auxiliary
role and satisfy the following $3-$term recursive equation:%
\begin{equation}
B_{n+1}\left( y|q\right) \allowbreak =\allowbreak -q^{n}yB_{n}\left(
y|q\right) +q^{n-1}\left[ n\right] _{q}B_{n-1}\left( y|q\right) ;n\geq 0,
\label{_B}
\end{equation}%
with $B_{-1}\left( y|q\right) =0,$ $B_{0}\left( y|q\right) =1.$ Formula (16)
of \cite{bms} allows to express them through $q-$Hermite polynomials. 
\newline
Namely we have: $B_{n}(x|q)=\left\{ 
\begin{array}{ll}
i^{n}q^{n(n-2)/2}H_{n}(i\sqrt{q}\,x|\frac{1}{q}) & \text{for }q>0 \\ 
(-1)^{n(n-1)/2}|q|^{n(n-2)/2}H_{n}(-\sqrt{|q|}\,x|\frac{1}{q}) & \text{for }%
q<0%
\end{array}%
,\right. $ where $i\allowbreak =\allowbreak \sqrt{-1}.$ Obviously we have $%
B_{n}\left( x|0\right) \allowbreak =\allowbreak 0$ for $n>2$ and also one
can see that $B_{n}\left( x|1\right) \allowbreak =\allowbreak
i^{n}H_{n}\left( iy\right) ,$ $n\geq 0.$

The properties of families of polynomials $\left\{ H_{n}\right\} _{n\geq 0},$
$\left\{ P_{n}\right\} _{n\geq 0},$ $\left\{ R_{n}\right\} _{n\geq 0},$
including 'connection coefficient formulae' met in the literature, are
collected in the following Lemma

\begin{lemma}
\label{znane}i) $\forall n\geq 1:P_{n}\left( x|y,\rho ,q\right)
=\sum_{j=0}^{n}\QATOPD[ ] {n}{j}\rho ^{n-j}B_{n-j}\left( y|q\right)
H_{j}\left( x|q\right) ,$

ii) $\forall n>0:\sum_{j=0}^{n}\QATOPD[ ] {n}{j}B_{n-j}\left( x|q\right)
H_{j}\left( x|q\right) =0,$

iii) $\forall n\geq 0:H_{n}\left( x|q\right) =\sum_{j=0}^{n}\QATOPD[ ] {n}{j}%
\rho ^{n-j}H_{n-j}\left( y|q\right) P_{j}\left( x|y,\rho ,q\right) ,$

iv) $\forall n\geq 0:$ $U_{n}\left( x\sqrt{1-q}/2\right)
=\sum_{j=0}^{\left\lfloor n/2\right\rfloor }\left( -1\right)
^{j}(1-q)^{n/2-j}q^{j\left( j+1\right) /2}\QATOPD[ ] {n-j}{j}%
_{q}H_{n-2j}\left( x|q\right) $ and $H_{n}\left( y|q\right)
=\sum_{k=0}^{\left\lfloor n/2\right\rfloor }(1-q)^{-n/2}q^{k}\left( \QATOPD[
] {n}{k}_{q}-q^{n-2k+1}\QATOPD[ ] {n}{k-1}_{q}\right) U_{n-2k}\left( y\sqrt{%
1-q}/2\right) ,$

v) $\forall n\geq 1,\left\vert \beta \right\vert ,\left\vert \gamma
\right\vert <1:R_{n}\left( x|\gamma ,q\right) \allowbreak =\allowbreak
\sum_{k=0}^{\left\lfloor n/2\right\rfloor }\beta ^{k}\frac{\left[ n\right]
_{q}!\left( \gamma /\beta \right) _{k}\left( \gamma \right) _{n-k}(1-\beta
q^{n-2k})}{\left[ k\right] _{q}!\left[ n-2k\right] _{q}!\left( \beta
q\right) _{n-k}(1-\beta )}R_{n-2k}\left( x|\beta ,q\right) ,$ \newline
in particular: $R_{n}\left( x|\gamma ,q\right) \allowbreak =\allowbreak
\sum_{k=0}^{\left\lfloor n/2\right\rfloor }(-1)^{k}\gamma ^{k}q^{k(k-1)/2}%
\frac{\left[ n\right] _{q}!\left( \gamma \right) _{n-k}}{\left[ k\right]
_{q}!\left[ n-2k\right] _{q}!}H_{n-2k}\left( x|q\right) $ \newline
and $H_{n}\left( x|q\right) \allowbreak =\allowbreak
\sum_{k=0}^{\left\lfloor n/2\right\rfloor }\beta ^{k}\frac{\left[ n\right]
_{q}!\left( 1-\beta q^{n-2k}\right) }{(1-\beta )\left[ k\right] _{q}!\left[
n-2k\right] _{q}!\left( \beta q\right) _{n-k}}R_{n-2k}\left( x|\beta
,q\right) .$
\end{lemma}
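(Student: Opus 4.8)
The plan is to prove all five assertions by exploiting the three-term recurrences \eqref{H}, \eqref{R}, \eqref{Al}, \eqref{_B} directly, rather than by manipulating the infinite-product densities. The governing principle is that two sequences of polynomials defined by the \emph{same} recurrence are forced to coincide once we verify that the proposed right-hand sides satisfy that recurrence with the correct initial conditions; this converts each identity into a finite (and mostly mechanical) induction. I would treat the items in an order that lets later parts lean on earlier ones: first establish iv) and the $\gamma=0$ special cases, since $H_n(\cdot|q)$ and the rescaled Chebyshev $U_n$ are the base objects; then v), which is a $q$-ultraspherical connection result; then i) and ii), which involve the auxiliary family $B_n(\cdot|q)$; and finally iii), which I would derive from i) by an inversion argument.

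\textbf{Items iv), v) and the Hermite--Chebyshev dictionary.}
For iv) I would fix the claimed expansion of $U_n(x\sqrt{1-q}/2)$ in terms of $H_{n-2j}(x|q)$ and verify by induction on $n$ that it obeys the recurrence \eqref{_0} (equivalently \eqref{H}); the appearance of $\QATOPD[]{n-j}{j}_q$ together with the factor $q^{j(j+1)/2}$ strongly suggests a Pascal-type $q$-binomial identity, namely $\QATOPD[]{n-j}{j}_q = \QATOPD[]{n-1-j}{j}_q + q^{\,n-2j}\QATOPD[]{n-1-j}{j-1}_q$, which is exactly what a one-step recurrence check will produce. The inverse relation expressing $H_n(y|q)$ in $U_{n-2k}$ is then either a formal matrix inversion of the (unitriangular in the even/odd grading) system, or another direct induction; I would present whichever is shorter. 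For v) the linearization $R_n(x|\gamma,q)=\sum_k(\cdots)R_{n-2k}(x|\beta,q)$ is a known connection formula for Rogers polynomials; the cleanest route is to recognize the $\gamma\to 0$ and $\beta\to 0$ limits (using $R_n(x|0,q)=H_n(x|q)$ from Proposition \ref{uwaga}) as the two stated special cases, and to obtain the general coefficient by the same recurrence-matching device applied to \eqref{R}.

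\textbf{Items i) and ii) via the auxiliary family.}
For i), I would regard the right-hand side $\sum_{j}\QATOPD[]{n}{j}\rho^{n-j}B_{n-j}(y|q)H_j(x|q)$ as a function of $x$ and check that it satisfies the Al-Salam--Chihara recurrence \eqref{Al} in $n$. Substituting $xH_j(x|q)=H_{j+1}(x|q)+[j]_qH_{j-1}(x|q)$ and collecting coefficients of $H_j(x|q)$, the recurrence \eqref{Al} will force an identity among the $y$-dependent coefficients $\QATOPD[]{n}{j}\rho^{n-j}B_{n-j}(y|q)$; this identity should reduce precisely to the recurrence \eqref{_B} for $B_m(y|q)$ combined with the $q$-Pascal rule for $\QATOPD[]{n}{j}_q$. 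Assertion ii) is then the special case of the vanishing that occurs when the Al-Salam--Chihara polynomial degenerates; concretely, setting the two spectral parameters to coincide (the $y$-argument equal to the evaluation point) should collapse $P_n$ and expose the orthogonality-driven annihilation, so I would derive ii) as a corollary of i) together with $P_n(x|x,\rho,q)=R_n(x|\rho,q)$ and the known fact that these have no constant term for $n\ge 1$. Finally iii) is the inverse connection to i): since the matrices $\big(\QATOPD[]{n}{j}\rho^{n-j}B_{n-j}(y|q)\big)$ and $\big(\QATOPD[]{n}{j}\rho^{n-j}H_{n-j}(y|q)\big)$ are lower-triangular and the families are related by the first through the $B$-$H$ pairing, I would invert the system formally, the required cancellation being exactly assertion ii).

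\textbf{Where the difficulty lies.}
The routine inductions are bookkeeping, so the genuine obstacle is the algebra of the $q$-binomial coefficients: every recurrence-matching step hinges on a $q$-Pascal identity of the form $\QATOPD[]{n}{j}_q=\QATOPD[]{n-1}{j-1}_q+q^{\,j}\QATOPD[]{n-1}{j}_q$ (and its companion), and in i) this must be made compatible simultaneously with the recurrence \eqref{_B} for $B_m$ and with the factor $\rho^{n-j}$. Getting the powers of $q$ to balance exactly is where sign and exponent errors are most likely, and it is the step I would check most carefully; the inversion argument for iii) is delicate only insofar as it depends on ii) being stated in the right normalization.
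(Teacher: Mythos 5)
Your route is genuinely different from the paper's. The paper proves this lemma almost entirely by citation and change of normalization: i) and ii) are quoted from \cite{bms}; iii) is obtained from formula (4.7) of \cite{IRS99} by passing from $h_n,p_n$ back to $H_n,P_n$ (with the remark that it also follows from i) and ii) by reversing the order of summation); iv) is the change-of-base formula for continuous $q$-Hermite polynomials of \cite{IS03}, \cite{Bress80}, \cite{GIS99}, specialized at the parameter pairs $(0,q)$ and $(q,0)$; and v) is Rogers' connection-coefficient formula, 13.3.5 of \cite{Is2005}, rewritten for $R_n$ and specialized at $\beta=0$, $\gamma=0$. You instead propose a self-contained verification by recurrence matching, and where you have carried it far enough it does work: for i), collecting the coefficient of $H_j(x|q)$ in \eqref{Al}, the terms of order $\rho^{n-j-1}$ cancel because $[j+1]_q\binom{n}{j+1}_q=[n]_q\binom{n-1}{j}_q$, and the terms of order $\rho^{n-j+1}$ reduce, via the $q$-Pascal rule and $[n]_q\binom{n-1}{j}_q=[n-j]_q\binom{n}{j}_q$, exactly to the recurrence \eqref{_B} for $B_{n-j}(y|q)$, as you predicted; for iv) the analogous check closes up through the identity $(1-q^{n+1-j})\binom{n-j}{j-1}_q=(1-q^{n-2j+2})\binom{n-j+1}{j-1}_q$; and your derivation of iii) from i) and ii) via $\binom{n}{j}_q\binom{j}{k}_q=\binom{n}{k}_q\binom{n-k}{j-k}_q$ is precisely the alternative proof the paper itself mentions. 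What your approach buys is independence from the literature; what it costs is that every item becomes a $q$-binomial computation, and that cost is very uneven across the five assertions.

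Two concrete soft spots. First, your derivation of ii) from i) is misstated: setting only $y=x$ yields $R_n(x|\rho,q)=\sum_j\binom{n}{j}_q\rho^{n-j}B_{n-j}(x|q)H_j(x|q)$, which is not ii); you must also put $\rho=1$ (legitimate, since both sides of i) are polynomials in $\rho$, or since your recurrence-matching argument never uses $|\rho|<1$), and the fact you then need is that $P_n(x|x,1,q)=R_n(x|1,q)$ vanishes \emph{identically} for $n\geq 1$ — an immediate induction from \eqref{R}, since $R_1(x|1,q)=0$. Your stated justification, that these polynomials "have no constant term," is both insufficient and false: $R_2(x|\rho,q)$ has constant term $-(1-\rho^2)$. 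Second, for v) you only really account for the two degenerate cases $\beta=0$ and $\gamma=0$ and assert that the general two-parameter formula follows "by the same recurrence-matching device." That is the one place where the plan is not yet a proof: the claimed coefficients involve $(\gamma/\beta)_k(\gamma)_{n-k}(1-\beta q^{n-2k})/\bigl((\beta q)_{n-k}(1-\beta)\bigr)$, and the check requires expressing $xR_{n-2k}(x|\beta,q)$ through \eqref{R}, which introduces denominators $(1-\beta q^{m})$; the resulting three-term identity among these coefficients is exactly Rogers' celebrated connection formula and is of a different order of difficulty than the Pascal-type bookkeeping that suffices for i) and iv). Either execute that computation in full or do what the paper does and quote 13.3.5 of \cite{Is2005}.
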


\begin{proof}
Formulae given in assertions i) and ii) are given in Remark 1 following
Theorem 1 in \cite{bms}. iii) We start with formula (4.7) in \cite{IRS99}
that gives connection coefficients of $h_{n}$ with respect to $p_{n}.$ Then
we pass to polynomials $H_{n}$ \& $P_{n}$ using formulae $h_{n}\left(
x|q\right) \allowbreak =\allowbreak \left( 1-q\right) ^{n/2}H_{n}\left( 
\frac{2x}{\sqrt{1-q}}|q\right) ,$ $n\geq 1$ and $p_{n}(x|a,b,q)\allowbreak
=\allowbreak \left( 1-q\right) ^{n/2}P_{n}\left( \frac{2x}{\sqrt{1-q}}|\frac{%
2a}{\sqrt{\left( 1-q\right) b}},\sqrt{b},q\right) .$ By the way notice that
this formula can be easily derived from assertions i) and ii) by standard
change of order of summation. iv) Follows 'change of base' formula in
continuous $q-$Hermite polynomials $(i.e.$ polynomials $h_{n})$ in e.g. \cite%
{IS03}, \cite{Bress80} or \cite{GIS99} (formula 7.2) that states that 
\begin{equation*}
h_{n}\left( x|p\right) =\sum_{k=0}^{\left\lfloor n/2\right\rfloor
}c_{n,n-2k}\left( p,q\right) h_{n-2k}\left( x|q\right)
\end{equation*}%
where 
\begin{eqnarray*}
c_{n,n-2k}\left( p,q\right) &=&\sum_{j=0}^{k}\left( -1\right)
^{j}p^{k-j}q^{j\left( j+1\right) /2}\QATOPD[ ] {n-2k+j}{j}_{q}\allowbreak
\times \allowbreak \\
&&(\QATOPD[ ] {n}{k-j}_{p}-p^{n-2k+2j+1}\QATOPD[ ] {n}{k-j-1}_{p})
\end{eqnarray*}%
again expressed for polynomials $h_{n}$, next one observes that $h_{n}\left(
x|0\right) \allowbreak =\allowbreak U_{n}\left( x\right) ,$ $\QATOPD[ ] {n}{k%
}_{0}\allowbreak =\allowbreak 1$ for $n\geq 0,$ $k\allowbreak =\allowbreak
0,\ldots ,n$ hence we have 
\begin{equation*}
c_{n,n-2k}\left( 0,q\right) \allowbreak =\allowbreak \left( -1\right)
^{k}q^{k\left( k+1\right) /2}\QATOPD[ ] {n-k}{k}_{q}
\end{equation*}
and consequently 
\begin{equation*}
U_{n}\left( x\right) =\sum_{k=0}^{\left\lfloor n/2\right\rfloor }\left(
-1\right) ^{k}q^{k\left( k+1\right) /2}\QATOPD[ ] {n-k}{k}_{q}h_{n-2k}\left(
x|q\right) ,
\end{equation*}
similarly we get 
\begin{equation*}
c_{n,n-2k}\left( q,0\right) \allowbreak =\allowbreak q^{k}(\QATOPD[ ] {n}{k}%
_{q}\allowbreak -\allowbreak q^{n-2k+1}\QATOPD[ ] {n}{k-1}_{q})
\end{equation*}
and consequently 
\begin{equation*}
h_{n}\left( x|q\right) \allowbreak =\allowbreak \sum_{k=0}^{\left\lfloor
n/2\right\rfloor }q^{k}(\QATOPD[ ] {n}{k}_{q}\allowbreak -\allowbreak
q^{n-2k+1}\QATOPD[ ] {n}{k-1}_{q})U_{n-2k}\left( x\right) .
\end{equation*}
Now it remains to return to polynomials $H_{n}$ . v) It is in fact the
celebrated connection coefficient formula for the Rogers polynomials which
was expressed in term of the polynomials $C_{n}$ (see 13.3.5 of \cite{Is2005}%
). Other formulae in this assertions are in fact applications of the first
formula with $\beta \allowbreak =\allowbreak 0$ in the first case and $%
\gamma \allowbreak =\allowbreak 0$ in the second and using the fact that $%
R_{n}\left( x|0,q\right) \allowbreak =\allowbreak H_{n}\left( x|q\right) .$
\end{proof}

We have an important proposition generalizing assertion ii) of the Lemma
above. We will use it in the proof of the Lemma \ref{connections} below.

\begin{lemma}
\label{connections}$\forall n\geq 0:\allowbreak $

$i)$ 
\begin{equation}
U_{n}\left( x\sqrt{1-q}/2\right) =\sum_{k=0}^{n}D_{k,n}\left( y,\rho
,q\right) P_{k}\left( x|y,\rho ,q\right) ,  \label{expUnap}
\end{equation}%
where%
\begin{eqnarray*}
D_{k,n}\left( y,\rho ,q\right) &=&\sum_{j=0}^{\left\lfloor
(n-k)/2\right\rfloor }\left( -1\right) ^{j}\left( 1-q\right)
^{n/2-j}q^{j\left( j+1\right) /2}\QATOPD[ ] {n-j}{n-k-j}\times \\
&&\QATOPD[ ] {n-k-j}{n-k-2j}\rho ^{n-k-2j}H_{n-k-2j}\left( y|q\right) .
\end{eqnarray*}

$ii)$ 
\begin{equation}
k_{n}\left( x\sqrt{1-q}|y\sqrt{1-q},\rho \right) \allowbreak
=\sum_{k=0}^{n}C_{k,n}\left( y,\rho ,q\right) P_{k}\left( x|y,\rho ,q\right)
,  \label{expansion}
\end{equation}%
where

\begin{eqnarray*}
C_{k,n}\left( y,\rho ,q\right) &=&\sum_{j=0}^{\left\lfloor
(n-k)/2\right\rfloor }\left( -1\right) ^{j}\left( 1-q\right)
^{n/2-j}q^{n-k+j\left( j-3\right) /2}\QATOPD[ ] {n-1-j}{n-k-2j}_{q}\times \\
&&\left( \QATOPD[ ] {j+k}{k}_{q}-\rho ^{2}q^{k}\QATOPD[ ] {j+k-1}{k}%
_{q}\right) \rho ^{n-k-2j}H_{n-k-2j}\left( y|q\right) .
\end{eqnarray*}
\end{lemma}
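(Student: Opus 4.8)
The plan is to derive both expansions by composing connection-coefficient formulae that are already available in Lemma~\ref{znane}, since each left-hand side is a polynomial that we wish to re-expand in the Al--Salam--Chihara basis $\{P_k(x|y,\rho,q)\}$. The natural intermediate basis is the $q$-Hermite family $\{H_m(x|q)\}$, because Lemma~\ref{znane} gives us a bridge in both directions: assertion iv) expresses $U_n(x\sqrt{1-q}/2)$ in $q$-Hermite polynomials, and assertion iii) expresses each $H_m(x|q)$ back in terms of $P_k(x|y,\rho,q)$. So for part $i)$ I would start from
\begin{equation*}
U_n\!\left(x\sqrt{1-q}/2\right)=\sum_{m=0}^{\lfloor n/2\rfloor}(-1)^{m}(1-q)^{n/2-m}q^{m(m+1)/2}\QATOPD[ ] {n-m}{m}_{q}H_{n-2m}(x|q),
\end{equation*}
substitute assertion iii),
\begin{equation*}
H_{n-2m}(x|q)=\sum_{k=0}^{n-2m}\QATOPD[ ] {n-2m}{k}_{q}\rho^{\,n-2m-k}H_{n-2m-k}(y|q)\,P_k(x|y,\rho,q),
\end{equation*}
and then interchange the two finite sums. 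Reindexing the inner double sum so that the coefficient of a fixed $P_k$ is collected should reproduce $D_{k,n}(y,\rho,q)$; the reindexing step (matching the summation variable $j$ in the claimed formula to $m$ here, and reconciling the two Gaussian binomials) is where I expect most of the bookkeeping to live.

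For part $ii)$ the cleaner route is to use the expansion identity we already have. By Remark~\ref{kesten} the polynomials $k_n(x\sqrt{1-q}\,|\,y\sqrt{1-q},\rho)$ are orthogonal with respect to $f_K(x|y,\rho,q)$, i.e. to $f_{CN}(x|y,\rho,0)$ after rescaling, and the defining recurrence $k_{n+1}=x k_n-k_{n-1}$ (with the modified first steps) is just the rescaled Chebyshev recurrence. This means $k_n(x\sqrt{1-q}\,|\,y\sqrt{1-q},\rho)$ is a fixed finite linear combination of the $U_j(x\sqrt{1-q}/2)$. Concretely, from $k_n(x|y,\rho)=U_n(x/2)-\rho y\,U_{n-1}(x/2)+\rho^2 U_{n-2}(x/2)$ (the $q=0$ case in Proposition~\ref{uwaga}, assertion~3), the rescaled version expands as
\begin{equation*}
k_n\!\left(x\sqrt{1-q}\,|\,y\sqrt{1-q},\rho\right)=U_n\!\left(x\sqrt{1-q}/2\right)-\rho y\sqrt{1-q}\,U_{n-1}\!\left(x\sqrt{1-q}/2\right)+\rho^2 U_{n-2}\!\left(x\sqrt{1-q}/2\right),
\end{equation*}
so I would simply feed the three terms into part $i)$ and combine. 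The coefficient of $P_k$ then becomes $D_{k,n}-\rho y\sqrt{1-q}\,D_{k,n-1}+\rho^2 D_{k,n-2}$, and the claim is that this telescopes into the single closed form $C_{k,n}(y,\rho,q)$.

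The real obstacle will be showing that this three-term combination of $D$'s collapses to the compact $C_{k,n}$ with its single product of Gaussian binomials $\QATOPD[ ] {n-1-j}{n-k-2j}_q\bigl(\QATOPD[ ] {j+k}{k}_q-\rho^2 q^k\QATOPD[ ] {j+k-1}{k}_q\bigr)$. This requires $q$-binomial identities: the Pascal-type recurrences for $\QATOPD[ ] {n}{k}_q$ and the contiguity relations that let the $H_{n-k-2j}(y|q)$ terms (which carry the $y$-dependence) be matched across the three shifted families. I expect the $\rho y\sqrt{1-q}$ term to interact with the three-term recurrence $H_{m+1}(y|q)=yH_m(y|q)-[m]_qH_{m-1}(y|q)$ for the $q$-Hermite polynomials in $y$, so that the factor of $y$ gets absorbed and the result depends on $y$ only through $H_{n-k-2j}(y|q)$. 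Managing this cancellation cleanly — rather than by brute expansion — is the delicate part; alternatively one could verify both formulae directly by computing $\int P_k\,f_{CN}\,dx$ against each side using the orthogonality relation~(\ref{z p}), thereby identifying $C_{k,n}$ and $D_{k,n}$ as the genuine Fourier--Al-Salam--Chihara coefficients and sidestepping the $q$-binomial algebra, at the cost of a separate orthogonality computation.
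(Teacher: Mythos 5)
Your derivation of part i) is correct, and it takes a genuinely different route from the paper's. The paper proves (\ref{expUnap}) backwards: it starts from the right-hand side, expands each $P_{k}\left( x|y,\rho ,q\right) $ through Lemma \ref{znane} i) into products $B_{k-i}\left( y|q\right) H_{i}\left( x|q\right) $, and then uses the inverse-pair identity of Lemma \ref{znane} ii) to annihilate all inner sums except the diagonal ones, finally matching the survivors against Lemma \ref{znane} iv). Your composition of Lemma \ref{znane} iv) with Lemma \ref{znane} iii) avoids the auxiliary polynomials $B_{n}$ altogether: after interchanging the two finite sums, the coefficient of $P_{k}$ is $\sum_{j}\left( -1\right) ^{j}\left( 1-q\right) ^{n/2-j}q^{j\left( j+1\right) /2}\QATOPD[ ] {n-j}{j}_{q}\QATOPD[ ] {n-2j}{k}_{q}\rho ^{n-k-2j}H_{n-k-2j}\left( y|q\right) $, and since $\QATOPD[ ] {n-j}{n-k-j}_{q}\QATOPD[ ] {n-k-j}{n-k-2j}_{q}=\QATOPD[ ] {n-j}{j}_{q}\QATOPD[ ] {n-2j}{k}_{q}$ (both equal $[n-j]_{q}!/([j]_{q}![k]_{q}![n-k-2j]_{q}!)$), this is exactly $D_{k,n}$. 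So part i) of your proposal is complete and arguably cleaner than the paper's argument.

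Part ii), however, contains a genuine gap. Your reduction of $k_{n}\left( x\sqrt{1-q}|y\sqrt{1-q},\rho \right) $ to $U_{n}\left( x\sqrt{1-q}/2\right) -\rho y\sqrt{1-q}\,U_{n-1}\left( x\sqrt{1-q}/2\right) +\rho ^{2}U_{n-2}\left( x\sqrt{1-q}/2\right) $ is right, and it is precisely the decomposition the paper targets (its proof of ii) runs the computation in the opposite direction, from $\sum_{k}C_{k,n}P_{k}$ back to these three Chebyshev polynomials). But the entire difficulty of the lemma sits in the step you leave as an assertion: that $D_{k,n}-\rho y\sqrt{1-q}\,D_{k,n-1}+\rho ^{2}D_{k,n-2}$ collapses to $C_{k,n}$. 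Carrying this out requires absorbing the explicit factor $y$ via $yH_{m}\left( y|q\right) =H_{m+1}\left( y|q\right) +[m]_{q}H_{m-1}\left( y|q\right) $, reindexing the resulting shifted sums, and then Pascal-type contiguity relations for the Gaussian binomials together with identities such as $q^{n-k-2j}[k+j]_{q}=[n-j]_{q}-[n-k-2j]_{q}$; this is exactly the page-long manipulation that constitutes the paper's proof of ii), so until it is executed your part ii) is a program, not a proof. Moreover, the fallback you offer — identifying $C_{k,n}$ as Fourier coefficients by integrating against $P_{k}f_{CN}$ with (\ref{z p}) — is circular as stated: the formula $C_{k,n}\left( \rho ^{2}\right) _{k}[k]_{q}!=\int k_{n}\left( \cdot \right) P_{k}f_{CN}dx$ is the Remark that follows the lemma, i.e.\ a consequence of it; you have no independent evaluation of that integral to use as a starting point.
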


\begin{remark}
Notice that 
\begin{equation*}
D_{k,n}\left( y,\rho ,q\right) \allowbreak \left( \rho ^{2}\right) _{k}\left[
k\right] _{q}!\allowbreak =\allowbreak \int_{-2/\sqrt{1-q}}^{2/\sqrt{1-q}%
}U_{n}\left( x\sqrt{1-q}/2\right) P_{k}\left( x|y,\rho ,q\right)
f_{CN}\left( x|y,\rho ,q\right) dx
\end{equation*}%
$\allowbreak $

and 
\begin{equation*}
C_{k,n}\left( y,\rho ,q\right) \allowbreak \left( \rho ^{2}\right) _{k}\left[
k\right] _{q}!\allowbreak =\newline
\allowbreak \int_{-2/\sqrt{1-q}}^{2/\sqrt{1-q}}P_{n}\left( x\sqrt{1-q}|y%
\sqrt{1-q},\rho ,0\right) P_{k}\left( x|y,\rho ,q\right) f_{CN}\left(
x|y,\rho ,q\right) dx.
\end{equation*}
\end{remark}

Let us define the following quantity: 
\begin{equation*}
\lbrack 2k-1]_{q}!!\allowbreak =\allowbreak \left\{ 
\begin{array}{ccc}
1 & if & k=0 \\ 
\prod_{i=1}^{k}[2i-1]_{q} & if & k\geq 1%
\end{array}%
\right. .
\end{equation*}

We have also some interesting corollaries based on the following easy,
elementary observations contained in the Remark below. It is following
simple induction applied to formulae (\ref{_0}), (\ref{H}), Proposition \ref%
{uwaga} 3. , (\ref{_B}), and (\ref{R}).

\begin{remark}
\label{zera}i) $U_{n}\left( 0\right) \allowbreak =\allowbreak \left\{ 
\begin{array}{ccc}
0 & if & n=2k-1 \\ 
(-1)^{k} & if & n=2k%
\end{array}%
\right. $ , $k=1,2,\ldots $

ii) $U_{n}\left( 1\right) \allowbreak =\allowbreak \left( -1\right)
^{n}U_{n}\left( -1\right) \allowbreak =\allowbreak \left( n+1\right) ,$

iii) $U_{n}\left( \frac{1}{2}\right) \allowbreak =\allowbreak \left(
-1\right) ^{3\left\lfloor (n+2)/3\right\rfloor }\left( (n+1-3\left\lfloor
(n+2)/3\right\rfloor \right) ,$ \newline
iv) $H_{n}\left( 0|q\right) \allowbreak =\allowbreak \left\{ 
\begin{array}{ccc}
0 & if & n=2k-1 \\ 
(-1)^{k}\left[ 2k-1\right] _{q}!! & if & n=2k%
\end{array}%
\right. ,$ $k\allowbreak =\allowbreak 1,2\ldots $ ,

$H_{n}\left( \frac{2}{\sqrt{1-q}}\right) \allowbreak =\allowbreak \frac{%
W_{n}\left( q\right) }{\left( 1-q\right) ^{n/2}},$ where $W_{n}\left(
q\right) $ by \ref{maksima} and $n\geq 1,$ \newline
v) $k_{n}\left( 0|y,\rho \right) \allowbreak =\allowbreak \left\{ 
\begin{array}{ccc}
(-1)^{k}\left( 1-\rho ^{2}\right) & if & n=2k \\ 
(-1)^{k-1}\rho y & if & n=2k-1%
\end{array}%
\right. ,$ $k=1,2,\ldots $ \newline
$k_{n}\left( 1|y,\rho \right) \allowbreak =\allowbreak \left\{ 
\begin{array}{ccc}
(-1)^{k}(1-\rho ^{2}) & if & n=3k \\ 
(-1)^{k-1}(-\rho y+\rho ^{2}) & if & n=3k-1 \\ 
\left( -1\right) ^{k-1}(1-\rho y) & if & n=3k-2%
\end{array}%
\right. ,$ $k=1,2,\ldots $

vi) $B_{n}\left( 0|q\right) \allowbreak =\allowbreak \left\{ 
\begin{array}{ccc}
0 & if & n=2k-1 \\ 
q^{k(k-1)}\left[ 2k-1\right] _{q}!! & if & n=2k%
\end{array}%
\right. ,$ $k\allowbreak =\allowbreak 1,2,\ldots $\newline
vii) $R_{n}\left( 0,\beta ,q\right) \allowbreak =\allowbreak \left\{ 
\begin{array}{ccc}
0 & if & n=2k-1 \\ 
(-1)^{k}\left( \beta ^{2};q^{2}\right) _{k}\left[ 2k-1\right] _{q}!! & if & 
n=2k%
\end{array}%
.\right. $
\end{remark}

\begin{corollary}
$\forall \rho ,q\in (-1,1);n\geq 1:$

i) 
\begin{equation*}
1-q^{n(n+1)/2}\allowbreak =\allowbreak
\sum_{j=0}^{n-1}(1-q)^{n-j}q^{j(j+1)/2}\QATOPD[ ] {2n-j}{j}%
_{q}[2n-2j-1]_{q}!!
\end{equation*}

ii) 
\begin{equation*}
P_{n}\left( 0|y,\rho ,q\right) =\sum_{j=0}^{\left\lfloor n/2\right\rfloor }%
\QATOPD[ ] {n}{2j}_{q}\left( -1\right) ^{j}\rho ^{n-2j}B_{n-2j}\left(
y|q\right) \left[ 2j-1\right] _{q}!!\allowbreak
\end{equation*}
\end{corollary}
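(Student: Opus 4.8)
The plan is to derive both identities by evaluating the two connection-coefficient formulae of Lemma~\ref{znane} at the special point $x=0$, where the $q$-Hermite and Chebyshev polynomials take the explicit values recorded in Remark~\ref{zera}. No genuinely new computation is needed: each part is a substitution into an already-established expansion followed by a sign and index cleanup.

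I would dispose of assertion ii) first, since it is immediate. Starting from Lemma~\ref{znane} i),
\[
P_{n}\left( x|y,\rho ,q\right) =\sum_{j=0}^{n}\QATOPD[ ] {n}{j}_{q}\rho ^{n-j}B_{n-j}\left( y|q\right) H_{j}\left( x|q\right),
\]
I set $x=0$. By Remark~\ref{zera} iv) the factor $H_{j}\left( 0|q\right)$ vanishes for odd $j$ and equals $(-1)^{k}\left[ 2k-1\right] _{q}!!$ when $j=2k$. Discarding the vanishing odd terms and renaming the surviving index $k$ as $j$ reproduces exactly the claimed sum over $0\le j\le \lfloor n/2\rfloor$.

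For assertion i) I would use the first ``change of base'' formula of Lemma~\ref{znane} iv) with its running index set equal to $2n$, namely
\[
U_{2n}\left( x\sqrt{1-q}/2\right) =\sum_{j=0}^{n}\left( -1\right) ^{j}(1-q)^{n-j}q^{j\left( j+1\right) /2}\QATOPD[ ] {2n-j}{j}_{q}H_{2n-2j}\left( x|q\right),
\]
and evaluate it at $x=0$. Here Remark~\ref{zera} i) gives $U_{2n}\left( 0\right) =(-1)^{n}$, while Remark~\ref{zera} iv) gives $H_{2n-2j}\left( 0|q\right) =(-1)^{n-j}\left[ 2n-2j-1\right] _{q}!!$. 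The two sign factors combine as $(-1)^{j}(-1)^{n-j}=(-1)^{n}$ and cancel against $U_{2n}(0)=(-1)^{n}$, leaving
\[
1=\sum_{j=0}^{n}(1-q)^{n-j}q^{j\left( j+1\right) /2}\QATOPD[ ] {2n-j}{j}_{q}\left[ 2n-2j-1\right] _{q}!!.
\]
Choosing the even index $2n$ loses nothing: for an odd index both $U$ and every surviving $H$ vanish at $0$, so the corresponding identity is just $0=0$.

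The only point needing care---and the closest thing to an obstacle---is the boundary term $j=n$. There $2n-2j-1=-1$, so by the stated convention $\left[ -1\right] _{q}!!=\left[ 2\cdot 0-1\right] _{q}!!=1$, together with $\QATOPD[ ] {n}{n}_{q}=1$ and $(1-q)^{0}=1$, this term equals $q^{n(n+1)/2}$. Isolating it from the sum and moving it to the left-hand side turns $1$ into $1-q^{n(n+1)/2}$ and restricts the remaining summation to $0\le j\le n-1$, which is precisely assertion i). Beyond verifying this sign and exponent bookkeeping at the endpoint, the argument is entirely formal.
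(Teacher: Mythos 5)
Your proof is correct and follows essentially the same route as the paper: both parts are obtained by evaluating connection-coefficient formulae of Lemma \ref{znane} at $x=0$ and invoking Remark \ref{zera} iv) (together with i) for the value of $U_{2n}(0)$), with the boundary summand $j=n$ supplying the term $q^{n(n+1)/2}$ in part i), exactly as in the paper. The only discrepancy is cosmetic: for part ii) the paper cites Lemma \ref{znane} iii), whereas the formula actually needed --- and the one you use --- is Lemma \ref{znane} i), so the paper's citation appears to be a misprint that your argument silently corrects.
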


\begin{proof}
i) We put $x\allowbreak =\allowbreak 0$ in Lemma \ref{znane} iv), use
assertion Remark \ref{zera} iv), substitute $n\longrightarrow 2n,$ perform
necessary simplifications, we get including fact that: $(1-q)^{k}\left[ 2k-1%
\right] _{q}!!\allowbreak =\allowbreak (q|q^{2})_{k-1}$ and $H_{0}\left(
0|q\right) \allowbreak =\allowbreak 1$ which leads to conclusion that the
summand for $j\allowbreak =\allowbreak n~$is equal to $\allowbreak
q^{n(n+1)/2}$.

ii) We put $x=0$ and apply Lemma \ref{znane} iii) and then use Remark \ref%
{zera} iv).
\end{proof}

\section{Expansions\label{exp}}

In this section we are going to apply the general idea of expansion
presented in Section \ref{idea}, use results of Section \ref{auxi} and
obtain expansions of some presented above densities in terms of the others.
Since there will be many such expansions to formulate all of them in one
theorem would lead to clumsy and unclear statement. Instead we divide this
section unto many subsections entitled by the names of the densities that
will be discussed in its body.

\subsection{$f_{N}$ and $f_{U}$}

Using assertion Lemma \ref{znane} iv) we deduce that coefficients $\gamma
_{0,n}$ in expanding $f_{N}$ is given by $\gamma _{0,n}\allowbreak
=\allowbreak \left\{ 
\begin{array}{ccc}
0 & if & n=2k+1, \\ 
\left( -1\right) ^{k}q^{k\left( k+1\right) /2} & if & n=2k,%
\end{array}%
\right. $ $k\allowbreak =\allowbreak 0,1,\ldots $ and we end up with an
expansion 
\begin{equation}
f_{N}\left( x|q\right) \allowbreak =\allowbreak f_{U}\left( x|q\right)
\sum_{k=0}^{\infty }\left( -1\right) ^{k}q^{k\left( k+1\right)
/2}U_{2k}\left( x\sqrt{1-q}/2\right) ,  \label{NnaU}
\end{equation}%
which was obtained and discussed in \cite{szab2} with a help of so called
"triple product identity". This formula was recently successfully applied to
prove "free infinite divisibility" of the $q$-Normal (defined above)
distribution. For details see \cite{ABBL}.

Using another assertion of Lemma \ref{znane} iv) we get the reciprocal of
the above expansion. Namely we have 
\begin{equation*}
\gamma _{0,n}\allowbreak =\allowbreak \left\{ 
\begin{array}{ccc}
0 & if & n=2k+1, \\ 
\left( 1-q\right) ^{-k}q^{k}\left( \QATOPD[ ] {2k}{k}_{q}-q\QATOPD[ ] {2k}{%
k-1}_{q}\right) & if & n=2k,%
\end{array}%
\right. .
\end{equation*}%
Notice that $\left( 1-q\right) ^{-k}q^{k}\left( \QATOPD[ ] {2k}{k}_{q}-q%
\QATOPD[ ] {2k}{k-1}_{q}\right) /[2k]_{q}!\allowbreak =\allowbreak \frac{%
q^{k}\left( 1-q\right) ^{k+1}}{\left( q\right) _{k}\left( q\right) _{k+1}}.$
Since we have also (\ref{z H}), we get: 
\begin{equation}
f_{U}\left( x|q\right) \allowbreak =\allowbreak f_{N}\left( x|q\right)
\sum_{k=0}^{\infty }\frac{q^{k}\left( 1-q\right) ^{k+1}}{\left( q\right)
_{k}\left( q\right) _{k+1}}H_{2k}\left( x|q\right) .  \label{UnaN}
\end{equation}

As corollaries we get the following useful formulae that were exposed
already in \cite{szab2} and which are presented here for completeness: 
\begin{equation*}
\left( q\right) _{\infty }\prod_{k=1}^{\infty }\left(
(1+q^{k})^{2}-(1-q)x^{2}q^{k}\right) \allowbreak =\allowbreak
\sum_{k=0}^{\infty }\left( -1\right) ^{k}q^{k\left( k+1\right)
/2}U_{2k}\left( x\sqrt{1-q}/2\right) ,
\end{equation*}%
which reduces (after putting $x\allowbreak =\allowbreak 0)$ to well known 
\begin{equation*}
\left( q\right) _{\infty }\left( -q\right) _{\infty }^{2}\allowbreak
=\allowbreak \left( -q\right) _{\infty }\left( q^{2}|q^{2}\right) _{\infty
}\allowbreak =\allowbreak \sum_{k=0}^{\infty }q^{k\left( k+1\right) /2}
\end{equation*}%
which is a particular case of the 'triple product identity' or (after
putting $x^{2}(1-q)\allowbreak =4)\allowbreak $ to:%
\begin{equation*}
\left( q\right) _{\infty }^{3}\allowbreak =\allowbreak \sum_{k=0}^{\infty
}\left( -1\right) ^{k}\left( 2k+1\right) q^{k(k+1)/2}.
\end{equation*}%
Similarly analyzing (\ref{UnaN}) we get: 
\begin{equation*}
\prod_{k=1}^{\infty }\left( (1+q^{k})^{2}-(1-q)x^{2}q^{k}\right)
^{-1}\allowbreak =\allowbreak \sum_{k=0}^{\infty }\frac{q^{k}\left(
q^{k+1}\right) _{\infty }\left( 1-q\right) ^{k}}{\left( q^{2}\right) _{k}}%
H_{2k}\left( x|q\right) ,
\end{equation*}%
since $\left( q\right) _{\infty }/\left( q\right) _{k}\allowbreak
=\allowbreak \left( q^{k+1}\right) _{\infty }$ and $\left( q\right)
_{k+1}\allowbreak =\allowbreak \left( 1-q\right) \left( q^{2}\right) _{k}$
from which we get for example (by setting $x=0)$ identity 
\begin{equation*}
\frac{1}{\left( q\right) _{\infty }\left( -q\right) _{\infty }^{2}}%
\allowbreak =\allowbreak 1+\sum_{k=1}^{\infty }(-1)^{k}\frac{q^{k}\left(
1-q\right) ^{k}}{\left( q\right) _{k}\left( q^{2}\right) _{k}}\left[ 2k-1%
\right] _{q}!!,
\end{equation*}%
or (after inserting $x^{2}\left( 1-q\right) \allowbreak =\allowbreak 4$ and
applying Remark \ref{zera} iv)): 
\begin{equation*}
\left( q\right) _{\infty }^{-3}\allowbreak =\allowbreak \sum_{k=0}^{\infty }%
\frac{q^{k}W_{2k}\left( q\right) }{\left( q\right) _{k}\left( q^{2}\right)
_{k}}.
\end{equation*}

\subsection{$f_{N}$ and $f_{CN}$}

We use Lemma \ref{znane} i) we deduce that coefficients $\gamma _{0,n}$ in
expanding $f_{CN}$ are given by $\gamma _{0,n}\allowbreak =\allowbreak \rho
^{n}B_{n}\left( y|q\right) .$ Keeping in mind (\ref{z p}) we get%
\begin{equation}
f_{N}\left( x|q\right) \allowbreak =\allowbreak f_{CN}\left( x|y,\rho
,q\right) \sum_{n=0}^{\infty }\frac{\rho ^{n}}{\left( \rho ^{2}\right)
_{n}[n]_{q}!}B_{n}\left( y|q\right) P_{n}\left( x|y,\rho ,q\right) .
\label{NnaCN}
\end{equation}

We use Lemma \ref{znane} iii) we deduce that coefficients $\gamma _{0,n}$ in
expanding $f_{CN}$ is given by $\gamma _{0,n}\allowbreak =\allowbreak \rho
^{n}H_{n}\left( y|q\right) .$ Keeping in mind (\ref{z H}) we get:%
\begin{equation}
f_{CN}\left( x|y,\rho ,q\right) \allowbreak =\allowbreak f_{N}\left(
x|q\right) \sum_{n=0}^{\infty }\frac{\rho ^{n}}{[n]_{q}!}H_{n}\left(
y|q\right) H_{n}\left( x|q\right) .  \label{CNnaN}
\end{equation}%
Notice that (\ref{CNnaN}) it is in fact the famous Poisson--Mehler kernel of
the $q-$ Hermite polynomials, while (\ref{NnaCN}) is its reciprocal. Compare 
\cite{bressoud} for another proof of (\ref{CNnaN}). Notice that for every
fixed $m,$ $\sum_{n=0}^{m}\frac{\rho ^{n}}{\left( \rho ^{2}\right)
_{n}[n]_{q}!}B_{n}\left( y|q\right) P_{n}\left( x|y,\rho ,q\right) $ is not
a symmetric function of $x$ and $y,$ while when $m\allowbreak =\allowbreak
\infty $ it is!

As a corollary (after putting $y\allowbreak =\allowbreak x$ and then using
Remark \ref{fRifN}) we get the following interesting expansion 
\begin{equation}
\frac{\left( \rho ^{2}\right) _{\infty }}{\left( \rho \right) _{\infty
}^{2}\prod_{k=0}^{\infty }\left( \left( 1+\rho q^{k}\right) ^{2}-\left(
1-q\right) \rho x^{2}q^{k}\right) }=\sum_{n=0}^{\infty }\frac{\rho ^{n}}{%
\left[ n\right] _{q}!}H_{n}^{2}\left( x|q\right) ,  \label{H_kw1}
\end{equation}%
which reduces to the well known formula (see \cite{Is2005}, Exercise
12.3(b)) 
\begin{equation*}
\frac{\left( \rho ^{2}\right) _{\infty }}{\left( \rho \right) _{\infty }^{4}}%
\allowbreak =\allowbreak \sum_{n=0}^{\infty }\frac{\rho ^{n}}{\left(
q\right) _{n}}W_{n}^{2}\left( q\right) ,
\end{equation*}%
after inserting $x\allowbreak =\allowbreak 2/\sqrt{1-q}$ and applying \ref%
{maksima} with $W_{n}$ defined by (\ref{WandV}). Expansion (\ref{H_kw1})
after inserting $x\allowbreak =\allowbreak 0,$ can be reduced to: 
\begin{equation*}
\prod_{k=0}^{\infty }\frac{\left( 1-\rho ^{2}q^{2k+1}\right) }{(1-\rho
^{2}q^{2k})}\allowbreak =\allowbreak 1+\sum_{k=1}^{\infty }\rho
^{2k}\prod_{j=1}^{k}\frac{\left( 1-q^{2j-1}\right) }{\left( 1-q^{2j}\right) }%
,
\end{equation*}%
since as it can be easily noticed $\frac{\left( \left[ 2k-1\right]
_{q}!!\right) ^{2}}{\left[ 2k\right] _{q}!}\allowbreak =\allowbreak
\prod_{j=1}^{k}\frac{\left( 1-q^{2j-1}\right) }{\left( 1-q^{2j}\right) }$
and $\frac{\left( \rho ^{2}\right) _{\infty }}{\left( \rho \right) _{\infty
}^{2}\left( -\rho ^{2}\right) _{\infty }^{2}}\allowbreak =\allowbreak
\prod_{k=0}^{\infty }\frac{\left( 1-\rho ^{2}q^{2k+1}\right) }{(1-\rho
^{2}q^{2k})}.$

As far as convergence of series (\ref{NnaCN}) and (\ref{CNnaN}) is concerned
then we see that for $\left\vert \rho \right\vert ,\left\vert q\right\vert
<1 $ and $x,y$ $\in S_{q}$ function $g(x|y,\rho ,q)\allowbreak =\allowbreak
f_{CN}\left( x|y,\rho ,q\right) /f_{N}\left( x|q\right) \allowbreak
=\allowbreak \left( \rho _{2}\right) _{\infty }\prod_{k=0}^{\infty }\frac{1}{%
(1-\rho ^{2}q^{2k})^{2}-(1-q)\rho q^{k}(1+\rho ^{2}q^{2k})xy+(1-q)\rho
^{2}(x^{2}+y^{2})q^{2k}}$ both bounded and 'cut away from zero' hence its
square as well as reciprocal of this square are integrable on compact
interval $S_{q}.$ For exact bounds see \cite{Szab6} Proposition 1 vii).

\begin{remark}
Dividing both sides of (\ref{NnaCN}) and (\ref{CNnaN}) by $f_{N}\left(
x|q\right) ,$ letting $q\longrightarrow 1^{-}$ and keeping in mind that $%
B_{n}\left( x|1\right) \allowbreak =\allowbreak i^{n}H_{n}\left( ix\right) $
and that \newline
$P_{n}\left( x|y,\rho ,1\right) \allowbreak =\allowbreak \left( \sqrt{1-\rho
^{2}}\right) ^{n}H_{n}\left( \frac{(x-\rho y)}{\sqrt{1-\rho ^{2}}}\right) $%
we get:%
\begin{equation}
1/\sum_{n=0}^{\infty }\frac{\rho ^{n}}{n!}H_{n}\left( x\right) H_{n}\left(
y\right) \allowbreak =\allowbreak \sum_{n=0}^{\infty }\frac{\rho ^{n}i^{n}}{%
n!\left( 1-\rho ^{2}\right) ^{n/2}}H_{n}\left( ix\right) H_{n}\left( \frac{%
(x-\rho y)}{\sqrt{1-\rho ^{2}}}\right)  \label{odwracanie}
\end{equation}%
Here however situation is different. The series $\sum_{n=0}^{\infty }\frac{%
\rho ^{n}}{n!}H_{n}\left( x\right) H_{n}\left( y\right) $, as it is known,
is convergent for all $x,y\in \mathbb{R}$ and $\left\vert \rho \right\vert
<1,$ while the series (\ref{odwracanie}) only for $x,y\in \mathbb{R}$ and $%
\rho ^{2}<1/2$ since only then the function $f_{N}^{2}(x|q)/f_{CN}\left(
x|y,\rho ,q\right) \allowbreak =\allowbreak \exp (-\frac{(x-\rho y)^{2}}{%
2(1-\rho ^{2})}+x^{2})$ is integrable with respect to $x$ over whole $%
\mathbb{R}.$
\end{remark}

\subsection{ $f_{N}$ and $f_{R}$}

We use the last two statements of Lemma \ref{znane} v). We deduce that
coefficients $\gamma _{0,n}$ in expanding $f_{R}$ are given by 
\begin{equation*}
\gamma _{0,n}\allowbreak =\allowbreak \left\{ 
\begin{array}{ccc}
0 & if & n=2k+1, \\ 
\frac{\left[ 2k\right] _{q}!\beta ^{k}}{\left[ k\right] _{q}!\left( \beta
q\right) _{k}} & if & n=2k,%
\end{array}%
\right. ,
\end{equation*}%
$k\allowbreak \allowbreak =\allowbreak 0,1,\ldots .$ Keeping in mind (\ref{z
H}) we get:%
\begin{equation}
f_{R}\left( x|\beta ,q\right) \allowbreak =\allowbreak f_{N}\left(
x|q\right) \sum_{k=0}^{\infty }\frac{\beta ^{k}}{[k]_{q}!\left( \beta
q\right) _{k}}H_{2k}\left( x|q\right) .  \label{RnaN}
\end{equation}%
As a corollary let us take $\beta \allowbreak =\allowbreak \rho $ and use (%
\ref{fRnafN}) and compare it with (\ref{H_kw1}). We will get then for $%
\left\vert q\right\vert ,\left\vert \rho \right\vert <1,x^{2}\left(
1-q\right) \leq 2$:%
\begin{equation*}
\left( 1-\rho \right) \sum_{n=0}^{\infty }\frac{\rho ^{n}}{\left[ n\right]
_{q}!}H_{n}^{2}\left( x\right) \allowbreak =\allowbreak \sum_{n=0}^{\infty }%
\frac{\rho ^{n}}{\left[ n\right] _{q}!\left( \rho q\right) _{n}}H_{2n}\left(
x|q\right) .
\end{equation*}%
Next we use second assertion of v) of Lemma \ref{znane} and deduce that
coefficient $\gamma _{0,n}$ in expanding $f_{N}$ is by 
\begin{equation*}
\gamma _{0,n}\allowbreak =\allowbreak \left\{ 
\begin{array}{ccc}
0 & if & n=2k+1, \\ 
\left( -\gamma \right) ^{k}q^{k\left( k-1\right) /2}\frac{\left[ 2k\right]
_{q}!\left( \gamma \right) _{k}}{\left[ k\right] _{q}!} & if & n=2k,%
\end{array}%
\right. ,
\end{equation*}%
$k\allowbreak =\allowbreak 0,1,\ldots $ . We use also (\ref{z R}) and get 
\begin{equation}
f_{N}\left( x|q\right) \allowbreak =\allowbreak f_{R}\left( x|\gamma
,q\right) \sum_{k=0}^{\infty }\left( -\gamma \right) ^{k}q^{k\left(
k-1\right) /2}\frac{\left( \gamma \right) _{k}\left( 1-\gamma q^{2k}\right) 
}{\left( 1-\gamma \right) [k]_{q}!\left( \gamma ^{2}\right) _{2k}}%
R_{2k}\left( x|\gamma ,q\right) .  \label{NnaR}
\end{equation}%
Again we can deduce that one of the series (\ref{NnaR}) and (\ref{RnaN}) is
the reciprocal of the other.

\subsection{$f_{K}$ and $f_{CN}$}

Recall that the densities $f_{K}$ and $f_{CN}$ are given by (\ref{fK}) and (%
\ref{fCN}) respectively. We will be using Lemma \ref{connections} ii) ,
Remark \ref{kesten}, and the fact that for $n\geq 1:$ 
\begin{eqnarray*}
&&\int_{-2/\sqrt{1-q}}^{2/\sqrt{1-q}}f_{K}\left( \xi |y,\rho ,q\right)
k_{n}^{2}\left( \xi \sqrt{1-q}|y\sqrt{1-q},\rho \right) d\xi \\
&=&\frac{1}{\sqrt{1-q}}\int_{-2}^{2}f_{K}\left( x/\sqrt{1-q}|y/\sqrt{1-q}%
,\rho ,0\right) k_{n}^{2}\left( x|y,\rho \right) dx=\frac{\left( 1-\rho
^{2}\right) }{\sqrt{1-q}}.
\end{eqnarray*}%
Beside notice that $C_{0,1}\left( y,\rho ,q\right) \allowbreak =\allowbreak
1.$ Hence $\beta _{1}\left( y,\rho ,q\right) \allowbreak =\allowbreak 0.$
Consequently we get $\forall x\in <\frac{-2}{\sqrt{1-q}},\frac{2}{\sqrt{1-q}}%
>;\allowbreak y\in <\frac{-2}{\sqrt{1-q}},\frac{2}{\sqrt{1-q}}>;\allowbreak
0<\left\vert \rho \right\vert <1;\allowbreak q\in (-1,1)$ 
\begin{equation*}
f_{CN}\left( x|y,\rho ,q\right) \allowbreak =\allowbreak f_{K}\left( x|\rho
,q\right) (1+\allowbreak \sum_{n=2}^{\infty }\beta _{n}\left( y,\rho
,q\right) k_{n}(x\sqrt{1-q}|y\sqrt{1-q},\rho ))),
\end{equation*}%
where $\beta _{k}\left( y,\rho ,q\right) \allowbreak =\allowbreak
\sum_{j=1}^{\left\lfloor k/2\right\rfloor }(-1)^{j}\left( 1-q\right)
^{k/2-j}q^{k+j(j-3)/2}\QATOPD[ ] {k-1-j}{k-2j}\rho ^{k-2j}H_{k-2j}\left(
y|q\right) .$

\subsection{$f_{U}$ and $f_{CN}$}

Using Lemma \ref{connections} i) and calculating in the similar way we get: $%
\forall x\in <\frac{-2}{\sqrt{1-q}},\frac{2}{\sqrt{1-q}}>;\allowbreak y\in <%
\frac{-2}{\sqrt{1-q}},\frac{2}{\sqrt{1-q}}>;\allowbreak 0<\left\vert \rho
\right\vert <1;\allowbreak q\in (-1,1),$ 
\begin{equation}
f_{CN}\left( x|y,\rho ,q\right) \allowbreak \allowbreak =\allowbreak
f_{U}\left( x|q\right) (1+\sum_{k=1}^{\infty }\gamma _{k}\left( y,\rho
,q\right) U_{k}\left( x\sqrt{1-q}/2\right) ),  \label{fCNnafU}
\end{equation}%
with $\gamma _{k}\left( y,\rho ,q\right) \allowbreak =\allowbreak
\sum_{j=0}^{\left\lfloor k/2\right\rfloor }\left( -1\right) ^{j}\left(
1-q\right) ^{k/2-j}\allowbreak \times \allowbreak q^{j\left( j+1\right) /2}%
\QATOPD[ ] {k-j}{k-2j}_{q}\rho ^{k-2j}H_{k-2j}\left( y|q\right) .$

\begin{corollary}
\begin{eqnarray*}
&&\left( q^{3};q^{3}\right) _{\infty }\sum_{k=0}^{\infty }\frac{\left(
1-q\right) ^{k/2}\rho ^{k}}{\left( q\right) _{k}}H_{k}\left( y|q\right) \eta
_{k}\left( q\right) \\
&=&\frac{\left( \rho ^{2}\right) _{\infty }\left( q^{3};q^{3}\right)
_{\infty }}{\prod_{k=0}^{\infty }\left( 1+\rho ^{2}q^{2k}+\rho ^{4}q^{4k}-%
\sqrt{1-q}\rho yq^{k}(1+\rho ^{2}q^{2k}\right) +(1-q)\rho ^{2}y^{2}q^{2k})}
\\
&=&1+\sum_{k=1}^{\infty }\left( -1\right) ^{3k}\left( \gamma _{3k}\left(
y,\rho ,q\right) +\gamma _{3k+1}\left( y,\rho ,q\right) \right) ,
\end{eqnarray*}%
where $\left\{ \eta _{k}\left( q\right) \right\} _{k\geq -1}$ are given
recursively $\eta _{-1}\left( q\right) \allowbreak =\allowbreak 0,\eta
_{0}\left( q\right) \allowbreak =\allowbreak 1,$ $\eta _{k+1}\left( q\right)
\allowbreak =\allowbreak \eta _{k}\left( q\right) \allowbreak -\allowbreak
\left( 1-q^{k}\right) \eta _{k-1}\left( q\right) ,$ $k\geq 0.$
\end{corollary}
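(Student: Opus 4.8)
The plan is to evaluate the already-established expansion (\ref{fCNnafU}) at the single point $x_{0}=1/\sqrt{1-q}$, chosen so that $x_{0}\sqrt{1-q}/2=1/2$, and then to read off the three displayed expressions as three different computations of the same ratio $f_{CN}(x_{0}|y,\rho ,q)/f_{U}(x_{0}|q)$. This point is special for two reasons: first $U_{k}(x_{0}\sqrt{1-q}/2)=U_{k}(1/2)$, whose values obey the $3$-periodic pattern of Remark \ref{zera} iii), namely $U_{3k}(1/2)=U_{3k+1}(1/2)=(-1)^{k}$ and $U_{3k+2}(1/2)=0$; second $(1-q)x_{0}^{2}=1$, which collapses the numerator factors of the product densities since $(1+q^{k})^{2}-(1-q)x_{0}^{2}q^{k}=(1+q^{k})^{2}-q^{k}=1+q^{k}+q^{2k}$.

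I would first prove that the last (third) expression equals the middle one. Dividing (\ref{fCNnafU}) by $f_{U}$ and putting $x=x_{0}$, the right-hand side becomes $1+\sum_{k\geq 1}\gamma _{k}(y,\rho ,q)U_{k}(1/2)$; by Remark \ref{zera} iii) every term with $k\equiv 2\pmod 3$ is annihilated and the surviving pairs $\gamma _{3k},\gamma _{3k+1}$ each carry the sign $(-1)^{k}=(-1)^{3k}$, which is exactly the third expression. For the middle expression I substitute $x_{0}$ into the explicit product forms (\ref{qN})/(\ref{fCN}) and (\ref{fU}): the $k=0$ numerator factor $4-(1-q)x_{0}^{2}$ cancels the $\sqrt{4-(1-q)x^{2}}$ factors, each remaining numerator factor becomes $1+q^{k}+q^{2k}=(1-q^{3k})/(1-q^{k})$, and the telescoping product $\prod_{k\geq 1}(1+q^{k}+q^{2k})=(q^{3};q^{3})_{\infty }/(q)_{\infty }$ combines with the prefactor $(\rho ^{2},q)_{\infty }=(\rho ^{2})_{\infty }(q)_{\infty }$ of $f_{CN}$ to yield $(\rho ^{2})_{\infty }(q^{3};q^{3})_{\infty }$ over the product of the denominators evaluated at $x_{0}$, i.e. the middle expression.

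Next I would identify the auxiliary sequence $\eta _{k}$ with $(1-q)^{k/2}H_{k}(1/\sqrt{1-q}|q)$. Setting $\tilde{\eta}_{k}=(1-q)^{k/2}H_{k}(x_{0}|q)$ and inserting $x_{0}=1/\sqrt{1-q}$ into the $q$-Hermite recurrence (\ref{H}), using $[k]_{q}(1-q)=1-q^{k}$, gives $\tilde{\eta}_{k+1}=\tilde{\eta}_{k}-(1-q^{k})\tilde{\eta}_{k-1}$ with $\tilde{\eta}_{0}=1$ and $\tilde{\eta}_{-1}=0$; since this is the defining recurrence and initial data of $\eta _{k}$, we get $\eta _{k}(q)=(1-q)^{k/2}H_{k}(x_{0}|q)$. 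Substituting this into the first expression and using $(q)_{k}=(1-q)^{k}[k]_{q}!$ turns $(1-q)^{k/2}\eta _{k}/(q)_{k}$ into $(1-q)^{k}H_{k}(x_{0}|q)/(q)_{k}=H_{k}(x_{0}|q)/[k]_{q}!$, so the first expression becomes $(q^{3};q^{3})_{\infty }\sum_{k\geq 0}\frac{\rho ^{k}}{[k]_{q}!}H_{k}(y|q)H_{k}(x_{0}|q)$. By the Poisson--Mehler formula (\ref{CNnaN})/(\ref{PM}) the sum equals $f_{CN}(x_{0}|y,\rho ,q)/f_{N}(x_{0}|q)=(\rho ^{2})_{\infty }\prod_{k\geq 0}(\text{denominator at }x_{0})^{-1}$; and since the same telescoping computation applied to Proposition \ref{uwaga} 1 gives $f_{N}(x_{0}|q)/f_{U}(x_{0}|q)=(q^{3};q^{3})_{\infty }$, the leading factor $(q^{3};q^{3})_{\infty }$ converts this sum into the middle expression, establishing the first equality.

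The step I expect to be the main obstacle is recognizing $\eta _{k}$ as the $q$-Hermite specialization $(1-q)^{k/2}H_{k}(1/\sqrt{1-q}|q)$, the one genuinely non-mechanical insight; once the evaluation point $x_{0}=1/\sqrt{1-q}$ is fixed, the remaining ingredients (the $U_{k}(1/2)$ bookkeeping from Remark \ref{zera} iii), the cancellation of the $k=0$ factor, and the telescoping identity $\prod_{k\geq 1}(1+q^{k}+q^{2k})=(q^{3};q^{3})_{\infty }/(q)_{\infty }$) are routine.
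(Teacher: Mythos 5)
Your proposal is correct and follows essentially the same route as the paper's own proof: evaluate (\ref{fCNnafU}) at $x=1/\sqrt{1-q}$ and use Remark \ref{zera} iii) for the third expression, substitute into the product forms of $f_{CN}$ and $f_{U}$ with the telescoping identity $\left( q\right)_{\infty}\prod_{k\geq 1}\left(1+q^{k}+q^{2k}\right)=\left( q^{3};q^{3}\right)_{\infty}$ for the middle one, and use the Poisson--Mehler formula (\ref{PM}) together with the identification $\eta_{k}\left( q\right)=\left(1-q\right)^{k/2}H_{k}\left(1/\sqrt{1-q}\,|q\right)$ (the paper phrases this as $h_{k}\left(1/2\right)$ and its three-term recurrence, which is the same observation) for the first. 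The only difference is expository: you spell out the bookkeeping that the paper compresses into a few sentences.
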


\begin{proof}
$\allowbreak $We insert $x\allowbreak =\allowbreak 1/\sqrt{1-q}$ in (\ref%
{fCNnafU}) and use Remark \ref{zera} iii) which simplifies to simple rule $%
U_{3m+2}\left( 1/2\right) \allowbreak =\allowbreak 0,\allowbreak
U_{3m}\left( 1/2\right) \allowbreak =\allowbreak U_{3m+1}\left( 1/2\right)
\allowbreak \left( -1\right) ^{3m}$. Then we insert $x=1/\sqrt{1-q}$ in (\ref%
{fCN}) and (\ref{fU}) and use the fact that $(1-\rho
^{2}q^{2k})^{2}\allowbreak +\allowbreak \rho ^{2}q^{2k}\allowbreak
=\allowbreak 1+\rho ^{2}q^{2k}+\rho ^{4}q^{4k}.$ On the way we also use (\ref%
{PM}), identity $\left( q\right) _{\infty }\prod_{k=1}^{\infty }\left(
1+q^{k}+q^{2k}\right) \allowbreak =\allowbreak \prod_{k=1}^{\infty }\left(
1-q^{3k}\right) \allowbreak =\allowbreak \left( q^{3};q^{3}\right) _{\infty
} $ , the fact $\left( 1-q\right) ^{k/2}H_{k}\left( \frac{1}{\sqrt{1-q}}%
\right) \allowbreak =\allowbreak h_{k}\left( 1/2\right) $ and the fact the
continuous $q-$Hermite polynomials $h_{n}\left( x|q\right) $ satisfy
relationship: $h_{n+1}\left( x|q\right) \allowbreak =\allowbreak
2xh_{n}\left( x|q\right) \allowbreak -\allowbreak \left( 1-q^{n}\right)
h_{n-1}\left( x|q\right) .$
\end{proof}

\section{Proofs\label{dowody}}

Let us start this section with very brief recollection of basic facts
concerning orthogonal polynomials.

\begin{enumerate}
\item If $\left\{ D_{n}\left( x\right) \right\} _{n\geq 0}$ is a sequence of
polynomials with respect to certain signed measure, then $\left\{ \eta
_{n}D_{n}\left( x\right) \right\} _{n\geq 0},$ for any nonzero sequence of
reals $\left\{ \eta _{n}\right\} $ has the same property. Thus we can
consider only monic sequences of orthogonal polynomials.

\item Every monic sequence of orthogonal polynomials say $\left\{
D_{n}\left( x\right) \right\} _{n\geq 0}$ satisfies the so called three term
recurrence (3TR) that is there exist two sequences of reals $\left\{ \alpha
_{n}\right\} _{n\geq 0}$ and $\left\{ \beta _{n}\right\} _{n\geq 0}$ such
that for every $n\geq 0$ we have%
\begin{equation*}
xD_{n}\left( x\right) \allowbreak =\allowbreak D_{n+1}\left( x\right)
+\alpha _{n}D_{n}\left( x\right) +\beta _{n}D_{n-1}\left( x\right) ,
\end{equation*}%
with $D_{-1}\left( x\right) =0,D_{0}\left( x\right) =1.$

\item More over when we have a sequence of monic polynomials that satisfies
some 3TR with given sequences $\left\{ \alpha _{n}\right\} $ and $\left\{
\beta _{n}\right\} $ then there exists at least one signed measure such that
these polynomials are orthogonal with respect this measure. This statement
functions in the literature as "Favard's Theorem".

\item If $n\geq 0$ we have $\beta _{n}>0$ then this signed measure is a
positive measure.

\item There exists more subtle conditions imposed on sequences $\left\{
\alpha _{n}\right\} $ and $\left\{ \beta _{n}\right\} $ that guarantee that
the orthogonalizing measure is unique or that it has the density.
\end{enumerate}

For details see \cite{Akhiezer}, \cite{Is2005} or \cite{MAN}.

\begin{proof}[Proof of the Proposition \protect\ref{iloraz}]
i) Notice that $\phi _{n}$ is a monic polynomial of degree $n,$ for $n\geq
1. $ Now let us calculate $\int_{\mathbb{R}}\phi _{n}\left( x\right) B\left(
x\right) dx.$ We have :%
\begin{eqnarray*}
\int_{\mathbb{R}}\phi _{n}\left( x\right) B\left( x\right) dx
&=&\sum_{i=0}^{n}\sum_{j=0}^{N}f_{n-i}\frac{w_{j}}{\hat{a}_{j}}\int_{\mathbb{%
R}}a_{i}\left( x\right) a_{j}\left( x\right) A\left( x\right) dx \\
&=&\sum_{i=0}^{n}f_{n-i}w_{i}\allowbreak =\allowbreak 0
\end{eqnarray*}%
for $n\geq 1$ . Conversely, let us consider polynomial defined by $%
p_{n}\left( x\right) \allowbreak =\allowbreak \sum_{i=0}^{n}w_{n-i}\phi
_{i}\left( x\right) .$ \newline
We have $p_{n}(x)\allowbreak =\allowbreak
\sum_{i=0}^{n}w_{n-i}\sum_{j=0}^{i}f_{i-j}a_{j}\left( x\right) \allowbreak
=\allowbreak \sum_{j=0}^{n}a_{j}\left( x\right)
\sum_{i=j}^{n}w_{n-i}f_{i-j}\allowbreak =$\newline
$\allowbreak \sum_{j=0}^{n}a_{j}\left( x\right)
\sum_{k=0}^{n-j}w_{n-j-k}f_{k}\allowbreak =\allowbreak
\sum_{j=0}^{n}a_{j}\left( x\right) \sum_{s=0}^{n-j}w_{s}f_{n-j-s}\allowbreak
=\allowbreak a_{n}\left( x\right) .$

ii) Let $i\leq N.$ Keeping in mind representation of $W\left( x\right) $ and
orthogonality of polynomials $a_{i}\left( x\right) $ with respect to the
measure $\alpha $ we get. 
\begin{equation*}
\int_{\mathbb{R}}a_{i}\left( x\right) B\left( x\right) dx\allowbreak
=\allowbreak \int_{\mathbb{R}}a_{i}\left( x\right) W\left( x\right) A\left(
x\right) dx\allowbreak =\allowbreak w_{i}.
\end{equation*}%
Similarly if $i>N$ we get zero by the orthogonality of $\left\{
a_{i}\right\} _{i\geq 0}$with respect to $A\left( x\right) $.

iii) Let us define coefficients $c_{n,i}$ by the following expansion:%
\begin{equation*}
a_{n}\left( x\right) \allowbreak =\allowbreak
\sum_{i=0}^{n}c_{n,i}b_{i}\left( x\right) ,
\end{equation*}%
The fact that $\left\{ a_{n}\right\} $ and $\left\{ b_{n}\right\} $ are
monic implies that $\forall n\geq 0:c_{n,n}\allowbreak =\allowbreak 1.$ ii)
implies that $c_{i,0}\allowbreak =\allowbreak w_{i},$ $i\leq
n;c_{n,0}\allowbreak =\allowbreak 0$ for $n\geq N+1.$ Besides we have the
following relationships between coefficients $c_{n,i}$ that is implied by $%
3- $terms recurrences satisfied by families $\left\{ a_{i}\right\} $ and $%
\left\{ b_{i}\right\} .$ On one hand we have $xa_{n}\left( x\right)
\allowbreak =\allowbreak a_{n+1}\left( x\right) \allowbreak +\allowbreak
\alpha _{n}a_{n}\left( x\right) \allowbreak +\allowbreak \hat{\alpha}%
_{n}a_{n-1}\left( x\right) \allowbreak =\allowbreak b_{n+1}\left( x\right)
\allowbreak +\allowbreak (\alpha _{n}+c_{n+1,n})b_{n}\left( x\right)
\allowbreak +\allowbreak \sum_{i=0}^{n-1}\left( c_{n+1,i}\allowbreak
+\allowbreak \alpha _{n}c_{n,i}\allowbreak +\allowbreak \hat{\alpha}%
_{n}c_{n-1,i}\right) b_{i}\left( x\right) $ on the other $xa_{n}\left(
x\right) \allowbreak =\allowbreak \sum_{i=0}^{n}c_{n,i}(b_{i+1}\left(
x\right) \allowbreak +\allowbreak \beta _{i}b_{i}\left( x\right) \allowbreak
+\allowbreak \hat{\beta}_{i}b_{i-1}\left( x\right) )\allowbreak =\allowbreak
b_{n+1}\left( x\right) \allowbreak +\allowbreak (c_{n,n-1}\allowbreak
+\allowbreak \beta _{n})b_{n}\left( x\right) \allowbreak +\allowbreak
\sum_{i=1}^{n-1}(c_{n,i-1}\allowbreak +\allowbreak \beta
_{i}c_{n,i}\allowbreak +\allowbreak \hat{\beta}_{i}c_{n,i+1})b_{i}\left(
x\right) \allowbreak +\allowbreak \beta _{0}c_{n,0}\allowbreak +\allowbreak 
\hat{\beta}_{1}c_{n,1}.$ Equating these two sides we get:%
\begin{gather*}
\alpha _{n}+c_{n+1,n}\allowbreak =\allowbreak c_{n,n-1}+\beta _{n}, \\
\forall 1\leq i\leq n-1:c_{n+1,i}\allowbreak +\allowbreak \alpha
_{n}c_{n,i}\allowbreak +\allowbreak \hat{\alpha}_{n}c_{n-1,i}=c_{n,i-1}%
\allowbreak +\allowbreak \beta _{i}c_{n,i}\allowbreak +\allowbreak \hat{\beta%
}_{i}c_{n,i+1}, \\
c_{n+1,0}\allowbreak +\allowbreak \alpha _{n}c_{n,0}\allowbreak +\allowbreak 
\hat{\alpha}_{n}c_{n-1,0}\allowbreak =\allowbreak \beta
_{0}c_{n,0}\allowbreak +\allowbreak \hat{\beta}_{1}c_{n,1}.
\end{gather*}%
From the last of these equations we deduce that $c_{n,1}=0$ for $n\geq N+2.$
Similarly by considering equation 
\begin{equation*}
c_{n+1,1}\allowbreak +\allowbreak \alpha _{n}c_{n,1}\allowbreak +\allowbreak 
\hat{\alpha}_{n}c_{n-1,1}=c_{n,0}\allowbreak +\allowbreak \beta
_{i}c_{n,1}\allowbreak +\allowbreak \hat{\beta}_{i}c_{n,2}
\end{equation*}%
we deduce that $c_{n,2}\allowbreak =\allowbreak 0$ for $n\geq N+3$ and so
on. We see that then $c_{n,i}\allowbreak =\allowbreak 0$ for $n\geq
N\allowbreak +\allowbreak i\allowbreak +\allowbreak 1.$ In particular it
means that $c_{n,n-j}\allowbreak =\allowbreak 0$ for $j\geq N+1$ .
\end{proof}

\begin{proof}
of Lemma \ref{connections}. i) We will argue straightforwardly using Lemma %
\ref{znane} $i)$ and $ii)$ and then comparing it with assertion iv) of the
same Lemma.

We have \newline
$\sum_{k=0}^{n}D_{k,n}\left( y,\rho ,q\right) P_{k}\left( x|y,\rho ,q\right)
\allowbreak =\allowbreak \sum_{k=0}^{n}D_{k,n}\left( y,\rho ,q\right)
\sum_{i=0}^{k}\QATOPD[ ] {k}{i}_{q}\rho ^{k-i}B_{k-i}\left( y|q\right)
H_{i}\left( x|q\right) \allowbreak =\allowbreak $\newline
$\sum_{i=0}^{n}H_{i}\left( x|q\right) \sum_{k=i}^{n}\QATOPD[ ] {k}{i}%
_{q}D_{k,n}\left( y,\rho ,q\right) B_{k-i}\left( y|q\right) .$ Let us denote 
\newline
$G_{i,n}\left( y,\rho ,q\right) \allowbreak =\allowbreak \sum_{k=i}^{n}%
\QATOPD[ ] {k}{i}_{q}D_{k,n}\left( y,\rho ,q\right) B_{k-i}\left( y|q\right)
.$ We have using formula for $D_{k,n}\left( y,\rho ,q\right) .$ 
\begin{eqnarray*}
G_{i,n}(y,\rho ,q)\allowbreak &=&\allowbreak \sum_{k=i}^{n}\QATOPD[ ] {k}{i}%
_{q}\rho ^{k-i}B_{k-i}\left( y|q\right) \allowbreak \times \allowbreak
D_{k,n}\left( y,\rho ,q\right) \\
&=&\sum_{k=i}^{n}\QATOPD[ ] {k}{i}_{q}\rho ^{k-i}B_{k-i}\left( y|q\right)
\sum_{j=0}^{\left\lfloor (n-k)/2\right\rfloor }\left( -1\right) ^{j}\left(
1-q\right) ^{n/2-j}q^{j\left( j+1\right) /2}\QATOPD[ ] {n-j}{n-k-j}%
\allowbreak \\
&&\times \allowbreak \QATOPD[ ] {n-k-j}{n-k-2j}\rho
^{n-k-2j}H_{n-k-2j}\left( y|q\right) \\
&=&\sum_{j=0}^{\left\lfloor (n-i)/2\right\rfloor }\left( -1\right)
^{j}\left( 1-q\right) ^{n/2-j}q^{j\left( j+1\right) /2}\rho ^{n-i-2j}\QATOPD[
] {n-j}{j}_{q}\QATOPD[ ] {n-2j}{i}_{q}\allowbreak \times \allowbreak \\
&&\sum_{k=i}^{n-2j}\QATOPD[ ] {n-i-2j}{k-i}_{q}B_{k-i}\left( y|q\right)
H_{n-k-2j}\left( y|q\right) .
\end{eqnarray*}%
$\allowbreak $

Now $\sum_{k=i}^{n-2j}\QATOPD[ ] {n-i-2j}{k-i}_{q}B_{k-i}\left( y|q\right)
H_{n-k-2j}\left( y|q\right) \allowbreak =\allowbreak $\newline
$\sum_{s=0}^{n-i-2j}\QATOPD[ ] {n-i-2j}{s}_{q}B_{s}\left( y|q\right)
H_{n-i-2j-s}\left( y|q\right) \allowbreak =\allowbreak \left\{ 
\begin{array}{ccc}
1 & if & n-i=2j \\ 
0 & if & n-i>2j%
\end{array}%
\right. $ by Lemma \ref{znane} ii). \newline
Hence $G_{i,n}\left( y,\rho ,q\right) \allowbreak =\allowbreak \left\{ 
\begin{array}{ccc}
0 & if & n-i~~\text{is odd} \\ 
(-1)^{m}(1-q)^{n/2-m}q^{m(m+1)/2}\QATOPD[ ] {n-m}{m}_{q} & if & n-i=2m%
\end{array}%
\right. $. So $\sum_{k=0}^{n}D_{k,n}\left( y,\rho ,q\right) P_{k}\left(
x|y,\rho ,q\right) \allowbreak =\allowbreak \sum_{i=0}^{n}H_{i}\left(
x|q\right) G_{i,n}\left( y,\rho ,q\right) \allowbreak =\allowbreak
\sum_{m=0}^{\left\lfloor n/2\right\rfloor }(-1)^{m}(1-q)^{n/2-m}q^{m(m+1)/2}%
\QATOPD[ ] {n-m}{m}_{q}H_{n-2m}\left( x|q\right) \allowbreak =\allowbreak
U\left( x\sqrt{1-q}/2\right) $ Lemma \ref{znane} iv).

$ii)$ Notice that $C_{0,0}\left( y,\rho ,q\right) \allowbreak =\allowbreak
1, $ $C_{n,n}\left( y,\rho ,q\right) \allowbreak =\allowbreak (1-q)^{n/2},$ $%
C_{0,n}\left( y,\rho ,q\right) \allowbreak =\allowbreak (1-\rho
^{2})\allowbreak \times \allowbreak \sum_{j=1}^{\left\lfloor
n/2\right\rfloor }(-1)^{j}(1-q)^{n/2-j}q^{n+j(j-3)/2}\allowbreak \times
\allowbreak \QATOPD[ ] {n-1-j}{j}_{q}\rho ^{n-2j}H_{n-2j}\left( y|q\right) $%
, $C_{n-1,n}\left( y,\rho ,q\right) \allowbreak =\allowbreak \left(
1-q\right) ^{n/2}q\rho y\left[ n-1\right] _{q}.$ Hence $C_{0,1}(y,\rho
,q)\allowbreak =\allowbreak 0$ and $C_{1,1}(y,\rho ,q)\allowbreak
=\allowbreak (1-q)^{1/2},$ $C_{0,2}\left( y,\rho ,q\right) \allowbreak
=\allowbreak -(1-\rho ^{2}),$ $C_{1,2}\left( y,\rho ,q\right) \allowbreak
=\allowbreak (1-q)q\rho y.$ Thus equation (\ref{expansion}) is satisfied for 
$n\allowbreak =\allowbreak 0,1,2.$ For larger $n$ formula will be proved
straightforwardly. Let us consider an expression $W_{n}\left( x|y,\rho
,q\right) \allowbreak =\allowbreak \sum_{k=0}^{n}C_{k,n}\left( y,\rho
,q\right) P_{k}\left( x|y,\rho ,q\right) \allowbreak .$ We have%
\begin{eqnarray*}
W_{n}\left( x|y,\rho ,q\right) &=&\sum_{k=0}^{n}P_{k}\left( x|y,\rho
,q\right) \sum_{j=0}^{\left\lfloor (n-k)/2\right\rfloor }\left( -1\right)
^{j}\left( 1-q\right) ^{n/2-j}q^{n-k+j\left( j-3\right) /2}\QATOPD[ ] {n-1-j%
}{n-k-2j}_{q} \\
&&\times \left( \QATOPD[ ] {j+k}{k}_{q}-\rho ^{2}q^{k}\QATOPD[ ] {j+k-1}{k}%
_{q}\right) \rho ^{n-k-2j}H_{n-k-2j}\left( y|q\right) \\
&=&\sum_{j=0}^{\left\lfloor n/2\right\rfloor }\left( -1\right) ^{j}\left(
1-q\right) ^{n/2-j}q^{j(j+1)/2}\sum_{k=0}^{n-2j}\QATOPD[ ] {n-1-j}{n-k-2j}%
_{q} \\
&&\times \left( \QATOPD[ ] {j+k}{k}_{q}-\rho ^{2}q^{k}\QATOPD[ ] {j+k-1}{k}%
_{q}\right) \rho ^{n-k-2j}H_{n-k-2j}\left( y|q\right) P_{k}\left( x|y,\rho
,q\right)
\end{eqnarray*}

Now $n-k+j\left( j-3\right) /2\allowbreak =\allowbreak j\left( j+1\right)
/2\allowbreak +\allowbreak n-k-2j,$ $\QATOPD[ ] {n-1-j}{n-k-2j}_{q}\QATOPD[ ]
{j+k}{k}_{q}\allowbreak =\allowbreak \frac{\left[ n-1-j\right] _{q}!\left[
j+k\right] _{q}}{\left[ n-k-2j\right] _{q}!\left[ k\right] _{q}!\left[ j%
\right] _{q}!}\allowbreak =\allowbreak \frac{\lbrack j+k]_{q}}{\left[ n-j%
\right] _{q}}\QATOPD[ ] {n-j}{j}_{q}\QATOPD[ ] {n-2j}{k}_{q}$ and $\QATOPD[ ]
{n-1-j}{n-k-2j}_{q}\QATOPD[ ] {j+k-1}{k}_{q}\allowbreak =\allowbreak \frac{%
\left[ n-1-j\right] _{q}!}{\left[ n-k-2j\right] _{q}!\left[ k\right] _{q}!%
\left[ j-1\right] _{q}!}\allowbreak =$\allowbreak $\QATOPD[ ] {n-1-j}{j-1}%
_{q}\QATOPD[ ] {n-2j}{k}_{q},$ hence%
\begin{eqnarray*}
W_{n}\left( x|y,\rho ,q\right) &=&\sum_{j=0}^{\left\lfloor n/2\right\rfloor
}\left( -1\right) ^{j}\left( 1-q\right) ^{n/2-j}q^{j(j+1)/2}\frac{1}{\left[
n-j\right] _{q}}\QATOPD[ ] {n-j}{j}_{q} \\
&&\times \sum_{k=0}^{n-2j}\QATOPD[ ] {n-2j}{k}_{q}q^{n-k-2j}\rho
^{n-k-2j}H_{n-k-2j}\left( y|q\right) P_{k}\left( x|y,\rho ,q\right) \\
&&-\rho ^{2}\sum_{j=1}^{\left\lfloor n/2\right\rfloor }\left( -1\right)
^{j}\left( 1-q\right) ^{n/2-j}q^{j(j+1)/2}q^{n-2j}\QATOPD[ ] {n-1-j}{j-1}_{q}
\\
&&\times \sum_{k=0}^{n-2j}\QATOPD[ ] {n-2j}{k}_{q}\rho
^{n-k-2j}H_{n-k-2j}\left( y|q\right) P_{k}\left( x|y,\rho ,q\right) .
\end{eqnarray*}%
Now we apply Lemma \ref{znane} iii) and also the simple fact that $%
q^{n-k-2j}[k+j]_{q}\allowbreak =\allowbreak \lbrack n-j]_{q}\allowbreak
-\allowbreak \lbrack n-k-2j]_{q}.$ We get after applying Lemma \ref{znane}
iv)%
\begin{eqnarray*}
W_{n}\left( x|y,\rho ,q\right) &=&U_{n}\left( x\sqrt{1-q}/2\right)
-\sum_{j=0}^{\left\lfloor n/2\right\rfloor }\left( -1\right) ^{j}\left(
1-q\right) ^{n/2-j}q^{j(j+1)/2}\frac{[n-2j]_{q}}{\left[ n-j\right] _{q}}%
\QATOPD[ ] {n-j}{j}_{q} \\
&&\times \sum_{k=0}^{n-2j-1}\QATOPD[ ] {n-2j-1}{k}_{q}\rho
^{n-2j-k}H_{n-k-2j}\left( y|q\right) P_{k}\left( x|y,\rho ,q\right) \\
&&-\rho ^{2}\sum_{j=1}^{\left\lfloor n/2\right\rfloor }\left( -1\right)
^{j}\left( 1-q\right) ^{n/2-j}q^{j(j+1)/2}q^{n-2j}\QATOPD[ ] {n-1-j}{j-1}%
_{q}H_{n-2j}\left( x|q\right) .
\end{eqnarray*}%
Now we apply formula $H_{n-k-2j}\left( y|q\right) \allowbreak =\allowbreak
yH_{n-1-k-2j}(y|q)\allowbreak -\allowbreak \lbrack
n-1-2j-k]_{q}H_{n-2-2j-k}(y|q)$ and split the first sum into two. Since $%
\frac{[n-2j]_{q}}{\left[ n-j\right] _{q}}\QATOPD[ ] {n-j}{j}_{q}\allowbreak
=\allowbreak \QATOPD[ ] {n-1-j}{j}_{q}$ we see that the first of these two
sums is equal to $\rho \sqrt{1-q}yU_{n-1}\left( x\sqrt{1-q}/2\right) .$
Hence 
\begin{eqnarray*}
W_{n}\left( x|y,\rho ,q\right) &=&U_{n}\left( x\sqrt{1-q}/2\right) -\rho 
\sqrt{1-q}yU_{n-1}\left( x\sqrt{1-q}/2\right) \\
&&+\sum_{j=0}^{\left\lfloor n/2\right\rfloor }\left( -1\right) ^{j}\left(
1-q\right) ^{n/2-j}q^{j(j+1)/2}\frac{[n-2j]_{q}}{\left[ n-j\right] _{q}}%
\QATOPD[ ] {n-j}{j}_{q} \\
&&\times \sum_{k=0}^{n-2j-1}\QATOPD[ ] {n-2j-1}{k}_{q}[n-1-k-2j]_{q}\rho
^{n-2j-k}H_{n-2-k-2j}\left( y|q\right) P_{k}\left( x|y,\rho ,q\right) \\
&&+\rho ^{2}\sum_{j=0}^{\left\lfloor n/2-1\right\rfloor }\left( -1\right)
^{j}\left( 1-q\right) ^{n/2-1-j}q^{j(j+1)/2}q^{n-j-1}\QATOPD[ ] {n-2-j}{j}%
_{q}H_{n-2-2j}\left( x|q\right) .
\end{eqnarray*}%
Notice that 
\begin{equation*}
\sum_{k=0}^{n-2j-1}\QATOPD[ ] {n-2j-1}{k}_{q}[n-1-k-2j]_{q}\rho
^{n-2j-k}H_{n-2-k-2j}\left( y|q\right) P_{k}\left( x|y,\rho ,q\right)
\allowbreak =\allowbreak \lbrack n-1-2j]_{q}\rho ^{2}H_{n-2-2j}\left(
x|q\right)
\end{equation*}
by Lemma \ref{znane} iii). Besides $\frac{[n-2j]_{q}}{\left[ n-j\right] _{q}}%
\QATOPD[ ] {n-j}{j}_{q}\allowbreak =\allowbreak \QATOPD[ ] {n-1-j}{j}_{q}.$
Thus the sum of the last two summands is equal to\newline
\begin{eqnarray*}
&&\rho ^{2}(1-q)\sum_{j=0}^{\left\lfloor n/2\right\rfloor -1}\left(
-1\right) ^{j}\left( 1-q\right) ^{n/2-1-j}q^{j(j+1)/2}\QATOPD[ ] {n-1-j}{j}%
_{q}\left[ n-1-2j\right] _{q}H_{n-2-2j}\left( x|q\right) \allowbreak
+\allowbreak \\
&&\rho ^{2}\sum_{j=0}^{\left\lfloor n/2-1\right\rfloor }\left( -1\right)
^{j}\left( 1-q\right) ^{n/2-1-j}q^{j(j+1)/2}q^{n-j-1}\QATOPD[ ] {n-2-j}{j}%
_{q}H_{n-2-2j}\left( x|q\right) .
\end{eqnarray*}
Now 
\begin{equation*}
\QATOPD[ ] {n-1-j}{j}_{q}\left[ n-1-2j\right] _{q}\allowbreak =\allowbreak %
\left[ n-1-j\right] _{q}\QATOPD[ ] {n-2-2j}{j}_{q}
\end{equation*}
and $\left( 1-q\right) \left[ n-1-j\right] \allowbreak =\allowbreak
1-q^{n-1-j}$, hence the sum of last two summands is equal to \newline
\begin{equation*}
\rho ^{2}\sum_{j=0}^{\left\lfloor n/2-1\right\rfloor }\left( -1\right)
^{j}\left( 1-q\right) ^{n/2-1-j}q^{j(j+1)/2}\QATOPD[ ] {n-2-j}{j}%
_{q}H_{n-2-2j}\left( x|q\right) \allowbreak \allowbreak =\allowbreak \rho
^{2}U_{n-2}\left( x\sqrt{1-q}/2\right)
\end{equation*}
by Lemma \ref{znane} iv)
\end{proof}

\end{document}